\newtheorem{theorem}{Theorem}
\newtheorem{proposition}[theorem]{Proposition}
\newtheorem{definition}[theorem]{Definition}
\newtheorem{remark}[theorem]{Remark}
\newcommand{\vol}{{\operatorname{Vol}}}
\renewcommand{\phi}{\varphi}
\newtheorem{theo}{{\sc Theorem}}
\newtheorem{lem}[theo]{{\sc Lemma}}
\newenvironment{defn}{\medskip\noindent{\it Definition:\/} }{\medskip}
\begin{document}

\title[$L^{2}$-restriction bounds for eigenfunctions along curves in quantum completely integrable case ]
{$L^{2}$-restriction bounds for eigenfunctions along curves in the quantum completely integrable case}
\thanks{The author was supported by a William Dawson Fellowship and
 NSERC Grant OGP0170280}
\author{John A. Toth}

\address{Department of Mathematics and Statistics, McGill University, Montreal,
Canada}

\email{jtoth@math.mcgill.ca}

\maketitle

\begin{abstract}We show that for a quantum completely integrable system in two dimensions,the $L^{2}$-normalized joint eigenfunctions of the commuting semiclassical pseudodifferential operators satisfy restriction bounds of the form $ \int_{\gamma} |\phi_{j}^{\hbar}|^2 ds = {\mathcal O}(|\log \hbar|)$ for  {\em generic} curves $\gamma$ on the surface. We also prove that the maximal restriction bounds of Burq-Gerard-Tzvetkov \cite{BGT} are always attained for certain exceptional subsequences of eigenfunctions.

\end{abstract}

\section{Introduction}\label{section1}
Let $(M,g)$ be a compact, closed orientable Riemannian manifold
 Let $-\Delta : C^{\infty} (M) \rightarrow C^{\infty}(M)$ 
 be the associated Laplace-Beltrami operator eigenvalues
  $0 < \lambda_{1} \leq \lambda_{2} \leq \cdots$ and
   eigenfunctions $\phi_{j}; j=1,2,3,...$ satisfying
  $$ -\Delta_{g} \phi_{j} = \lambda_{j}^{2} \phi_{j},$$
   and $L^{2}$-normalized so that $\int_{M} |\phi_{j}|^{2} dvol(x) =1$. The celebrated Avakumovic-Levitan-H\"{o}rmander formula implies that 
 \begin{equation} \label{Hormander}
\| \phi_{j} \|_{L^{\infty}} = {\mathcal O}(\lambda_j^{\frac{n-1}{2}}).  
\end{equation}
  The example of the sphere shows that (\ref{Hormander}) is sharp. The corresponding sharp $L^{p}$-bounds are due to Sogge \cite{So1,So2,Soggebook}. Even though this $L^{\infty}$-bound  is far from generic \cite{STZ}, the only general improvements on (\ref{Hormander}) that we are aware of are due to Sogge and Zelditch \cite{SZ} and more recently, Sogge, Toth and Zelditch \cite{STZ, T4}.  These authors  obtain pointwise $o(\lambda^{\frac{n-1}{2}})$-bounds under a certain non-recurrence condition for the geodesic flow on $(M,g)$. The methods in \cite{STZ} follow closely the earlier work of Safarov \cite{S} and Safarov-Vassiliev \cite{SV}. 
  
     It is natural to ask whether one can generically improve the ${\mathcal O}(\lambda^{\frac{n-1}{2} })$- sup-bound by polynomial powers of $\lambda$ and if so, by how much?  In general, very little is known here: Polynomial improvements have been obtained by Iwaniec and Sarnak in  arithmetic hyperbolic cases \cite{Sa,IS}.  At the other extreme, in the {\em quantum completely integrable} (QCI) case  it is known that  under a natural Morse assumption,
   one can show that $\sup_{x \in M} |\phi_{\lambda}(x)| = {\mathcal O}(\lambda^{\frac{1}{4}})$ when the $\phi_{\lambda}$'s are joint eigenfunctions of the commuting operators and  $\dim M = 2$ (see \cite{T4}).   In the latter case, when $\dim M >2$,  one can at least hope to obtain a fairly complete answer to this question provided   the $\phi_{j}$'s are joint eigenfunctions of $n$-functionally independent, self-adjoint, joint elliptic, commuting $\hbar$-pseudodifferential operators $P_{1}(\hbar), P_{2}(\hbar),...,P_{n}(\hbar)$. However,  due to the presence of often complicated degeneracies of the Lagrangian foliation, even at the classical level the dynamics is only partially understood in general \cite{VN2}.  Similarily, at the quantum level, the  asymptotic  blow-up properties of eigenfunctions  (eg. sharp $L^{p}$-bounds) are also only partially understood \cite{T1}-\cite{T3},\cite{TZ1}-\cite{TZ3}.

  Apart from pointwise bounds, it is natural when studying
   asymptotic concentration properties of eigenfunctions to consider limits of expected values $\langle A \phi_{\lambda}, \phi_{\lambda} \rangle$ as $\lambda \rightarrow \infty$ and to compute the corresponding semiclassical defect measures. Formally, one can let $A$ approach $\delta_{\gamma}$, where the latter is surface measure along a submanifold, $\gamma \subset M$. Then, one is faced with  estimating  asymptotic upper bounds for $L^{2}$, or more generally, $L^{p}$ integrals along submanifolds of $M$. In the case of surfaces, these are curves and the concentration of these defectmeasures along  a periodic geodesic $\gamma$ is called {\em strong scarring}. For Laplace eigenfunctions, the eigenfunction restriction bounds  have been studied by Reznikov \cite{R} for hyperbolic surfaces and  Burq-Gerard-Tzvetkov \cite{BGT} for general manifolds (and for all $p \geq 2$).  Both papers are related to earlier  work of Tataru \cite{Ta}  on estimating boundary traces of wavefunctions.
% More precisely,   we establish eigenfunction  $L^{2}$-restriction bounds along general curves $\gamma:[a,b] \rightarrow M$. 
%Concentration of $L^{2}$-mass along curves (especially classical bicharacteristics) is very much related to the issue of strong {\em scarring}.
 We will focus here on the case where $p=2$ and $\dim M = 2$. (ie. $L^{2}$- restriction bounds along curves on surfaces).  At the moment, it is unclear to us whether our methods extend to $L^{p}$-restriction bounds for $p \neq 2$.  In the special case of $L^{2}$-integrals along curves, the estimates in \cite{BGT} are as follows:
  \begin{itemize} \label{bgt}
 \item (i) If $\gamma$ is a unit-length geodesic, then
 $$ \int_{\gamma} |\phi_{j}(s)|^{2} ds = {\mathcal O}(\lambda_j^{1/2}).$$
 \item(ii) If $\gamma$ is a curve with strictly-positive geodesic curvature,
 $$ \int_{\gamma} |\phi_{j}(s)|^{2} ds = {\mathcal O}(\lambda_j^{1/3}).$$ 
  \end{itemize}
%  An important point in the \cite{BGT} analysis is a normal form  for the phase function in the Hadamard parametrix for  the wave trace, which uses equivalence of glancing hypersurfaces to get the Airy-type bound (ii) in the case where the curve, $\gamma$ has strictly positive geodesic curvature. In the case where $\gamma$ is a  geodesic, it is the quadratic term in the phase that dominates and this gives (i).

In this article, we obtain {\em generic} asymptotic bounds
   for $\int_{\gamma}|\phi_{j}(s)|^{2} ds$ in the case where the $\phi_{j}$'s are {\em joint} eigenfunctions of the QCI system consisting of two commuting $\hbar$-pseudodifferential operators $P_{1}(\hbar)$ and $P_{2}(\hbar)$.     
  Other than the fact  that our analysis here is specific to QCI systems and to the case $p=2$, this paper differs from \cite{BGT} in several ways: 
  
  1) One of the main issues here is the {\em generic} behaviour of restriction bounds, where a curve $\gamma:[a,b] \rightarrow M$ is called generic if it satisfies the Morse condition in \ref{generic}. As we show in section 2, in the QCI case the restricted asymptotic eigenfunction mass, $\int_{\gamma} |\phi_j|^{2} ds,$  is much smaller than the prediction in (i) or (ii) above.  Indeed,  it is ${\mathcal O}(\log \lambda_j)$  (see Theorems \ref{mainthm} and \ref{mainthm2}) and the example of zonal harmonics on the sphere (see section \ref{zonal}) shows that this estimate is sharp.
  
  2) In Theorem \ref{mainthm2}, we establish a converse to (i) above, and show that the bound in (i) is {\em always} attained in the QCI case. Moreover, we identify the specific bicharacteristics in terms of the singular Lagrangian foliation that support such large eigenfunction scars.
  
  3) Finally, we prove all our results for a rather large class of possibly inhomogeneous semiclassical QCI Hamiltonians. The semiclassical Laplacian $P_{1}(\hbar) = -\hbar^{2} \Delta$ is a special case.   The results really have to do with
   the bicharacteristic flow and are not specific to geodesics.
   
%We also show that in the QCI examples, the maximal bound (i) is {\em always} attained (at least up to a power of $\log \lambda$ ) by a subsequence of joint eigenfunctions. Moreover, our main results (Theorems \ref{mainthm}  and \ref{mainthm2}) also apply to a large class of  inhomogeneous  QCI examples. 
Before going on, we explain what is meant here by the term {\em generic}.
%The term {\em generic} in Theorem \ref{mainthm} is a Morse condition on the integral $p_{2}$ defined in (\ref{generic}) below. 
Given $E_{1} >0$ a regular value of $p_{1}$, we assume that
for $(x,\xi)  \in p_{1}^{-1}(E_{1}),$ 
$$\partial_{\xi} p_{1}(x,\xi) \neq 0. \,\,\,\,(A1)$$
That is,  $p_{1}$ is real prinicipal type on the hypersurface $p_{1}^{-1}(E_1)$.
We define
 \begin{equation} \label{cylinder}
   C_{\gamma}:= \{ (x,\xi) \in T^{*}M; \, p_{1}(x,\xi) =E_{1}, \,\,\, x \in \gamma \} = p_{1}^{-1}(E_{1}) \cap \pi^{-1}(\gamma).
   \end{equation}
  
\begin{definition} \label{generic} Let $\iota: C_{\gamma} \rightarrow p_{1}^{-1}(E_1)$ be the standard  inclusion map.
We say that the ${\mathbb R}^{2}$-integrable system with moment map ${\mathcal P} = (p_{1}, p_{2})$ is {\em generic} along the curve
$\gamma: [a,b] \rightarrow M$ provided $\iota^{*}p_{2} \in  C^{\infty}( C_{\gamma} )$   is a Morse function and condition (A1) is satisfied.
\end{definition}

\begin{remark}
In the homogeneous case where $p_{1} = |\xi|^{2}_{g}$, the manifold $C_{\gamma} = S^{*}_{\gamma}M $. In this case, $E_{1} =1$ and by Euler homogeneity, $  \xi \cdot \partial_{\xi} |\xi|^{2}_{g} = 2 |\xi|^{2}_{g}$ so that (A1) is automatically satisfied.
\end{remark}
  \begin{theo} \label{mainthm}
Let $\phi^{\hbar}_{j}; j=1,2,3,...$ be the $L^{2}$-normalized joint eigenfunctions of the commuting operators $P_{1}(\hbar)$ and $P_{2}(\hbar)$ on a Riemannian surface $(M^{2},g)$ with joint eigenvalues $ (\lambda_{j}^{(1)}(\hbar) = E_{1} + {\mathcal O}(\hbar), \, \lambda_{j}^{(2)}(\hbar) ) \in Spec P_{1}(\hbar) \times Spec P_{2}(\hbar) ;  \, j=1,2,3,...$ Then for {\em generic} curves $\gamma: [a,b] \rightarrow M$ and $\hbar \in (0,\hbar_{0}]$,
$$ \int_{\gamma} |\phi^{\hbar}_{j}|^{2} ds = {\mathcal O}_{|\gamma|} \left(    |\log \hbar| \right). $$
Here, $|\gamma |$ denotes the length of the curve segment  $\gamma$ and the RHS of the above estimate is uniform over all energy values  $\{E \in {\mathbb R}; \,  (E_{1},E)  \in  {\mathcal P}(T^{*}M) \}$.
\end{theo}
%As we will show, this condition is satisfied in many (if not all) of the known examples of QCI Laplacians on surfaces.
In the special case where $P_{1}(\hbar) = -\hbar^{2} \Delta$ one can of course scale out $\hbar$ and Theorem \ref{mainthm} becomes
\vspace{2mm}

\begin{theo} \label{corollary}
Let $\phi_{j}; j=1,2,3,...$ be the $L^{2}$-normalized joint Laplace eigenfunctions of the commuting operators $P_{1}= -\Delta $ and $P_{2}$ on a Riemannian surface $(M^{2},g)$. Then, provided $\iota^{*}p_{2}|_{S^*_{\gamma}M}$ is Morse, one gets
$$ \int_{\gamma} |\phi_{j}|^{2} ds = {\mathcal O}_{|\gamma|} \left( \log \lambda_j \right). $$
\end{theo}

Just as in the case of maximal $L^{\infty}$-bounds, it turns out  zonal harmonics on spheres of revolution saturate the bounds in Theorems \ref{mainthm} and \ref{corollary}, so they are sharp. We discuss this example in detail in section \ref{examples}. 

 We will show in section \ref{nongeneric} (see Proposition \ref{mainprop}) that (certain) bicharacteristics  are  non-generic curves in the sense of Definition \ref{generic}.  In the homogeneous case,  it was already observed in \cite{BGT} (see estimate (i) above) that in the case where $\gamma$  is  a  geodesic, the restriction upper bounds can grow at the maximal rate $\sim \lambda^{1/2}$  Consistent with this,     in the QCI case, we will  show that there {\em always} exist certain  bicharacteristics  that support high $L^{2}$-mass for certain subsequences of eigenfunctions consistent with the $\lambda^{1/2}$-bound in  (i) (at least up to possible loss of $\log \lambda$). However,  it is important to note that  the {\em nature} of the bicharacteristic is very important when discussing restriction bounds.  To describe what we mean,  let ${\mathcal B}_{reg}$ (resp. ${\mathcal B}_{sing}$) denote the regular (resp. singular) values of the moment map ${\mathcal P}= (p_{1}, p_{2}) \subset {\mathbb R}^{2}$. In the general QCI case, most bicharcteristics of $H_{p_1}$ are subsets of Lagrangian tori in ${\mathcal P}^{-1}(B_{reg})$. These do not support large $L^{2}$-bounds along their configuration space projections. However, as was shown in \cite{TZ3} Lemma 3, unless $(M,g)$ is a flat torus, there is {\em always}  a subsequence of joint eigenfunctions of $P_1$ and $P_{2}$ with mass concentrated along (singular) joint orbits of the Hamilton fields $H_{p_1}$ and $H_{p_2}$ contained in ${\mathcal P}^{-1}({\mathcal B}_{sing})$.  The latter eigenfunctions saturate the maximal bounds in (ii) above.  This is of course consistent with simple examples like surfaces of revolution with metric $g = dr^{2} + a^{2}(r) d\theta^{2}$, where the equator is the projection of a singular orbit of the joint flow of $H_{p_{1}}$ and $H_{p_{2}}$. In this case, $p_{1} = |\xi|_{g}$ and $p_{2} = p_{\theta}$ with $p_{\theta}(v):= \langle v, \partial_{\theta} \rangle$. The  corresponding joint eigenfunctions, $\phi_{j}^{\hbar}$, of $P_{1}(\hbar) = -\hbar^{2} \Delta_{g}$ and $P_{2}(\hbar) = \hbar D_{\theta}$ with joint eigenvalues $(\lambda^{(1)}_{j}(\hbar), \lambda^{(2)}_{j}(\hbar)) = (1,0) + o(1)$  (the analogs of highest weight spherical harmonics) satisfy  $\int_{\gamma} |\phi_{j}^{\hbar}|^{2} ds \sim \hbar^{-1/2}$ along the equator, $\gamma$. So, in particular, the  equatorial geodesic is certainly non-generic in the sense of  Definition \ref{generic}. However, it is not hard to show that (see section \ref{examples}) the meridian great circles, while obviously also periodic geodesics {\em are} generic. This geodesic lies in the base space projection of a maximal Lagrangian torus. The zonal harmonics have $\hbar$-microsupport on this torus and, as we show in subsection \ref{zonal} that the latter eigenfunctions  have $L^{2}$-restriction bound $\int_{\Gamma} |\phi_j^{\hbar}|^{2} \sim \log \frac{1}{\hbar}$ along any meridian great circle, $\Gamma$. This is consistent with the results of Theorems \ref{mainthm} and \ref{corollary}.
 %To state our result in the latter case, it is necessary to briefly review here some basics of integrable systems:...
In the case of exceptional, non-generic bicharacteristics, we prove

%In analogy with the Burq-Gerard-Tzvetkov estimates \cite{BGT}, in the special case where $\gamma$ is a geodesic curve segment,
\begin{theo} \label{mainthm2}
Let $P_j;(\hbar);  j=1,2$ be an Eliasson non-degenerate, QCI system on a surface, $(M,g)$. Then,
\begin{itemize}
\item (i) \, When $\gamma$ is the projection of a bicharacteristic segment of $p_{1}$  contained in $ {\mathcal P}^{-1}({\mathcal B}_{reg}),$
$$ \int_{\gamma}|\phi_{j}(s;\hbar)|^{2} ds = {\mathcal O}_{|\gamma|}(1),$$ 
\item (ii) \, When $\gamma$ is the projection of a singular joint orbit in ${\mathcal P}^{-1}({\mathcal B}_{sing}),$ 
$$ \int_{\gamma} |\phi_{j}(s;\hbar)|^{2} ds = {\mathcal O}_{|\gamma|}(\hbar^{-1/2} ).  $$
Moreover,
 there exists a constant $c_{\gamma} >0$ depending only on the curve $\gamma,$ and a subsequence of joint eigenfunctions, $\phi_{j_k}^{\hbar}; k=1,2,...$ such that for $\hbar \in (0,\hbar_0],$
 $$ \int_{\gamma} |\phi_{j_k}(s;\hbar)|^{2} ds \geq  c_{\gamma} \hbar^{-1/2}  \,\,\, \mbox{when} \, \gamma \,\mbox{ is stable},$$
$$ \int_{\gamma} |\phi_{j_k}(s;\hbar)|^{2} ds \geq c_{\gamma} \hbar^{-1/2} |\log \hbar|^{-1}\,\,\, \mbox{when} \, \gamma \, \mbox{is unstable}.$$
\end{itemize}
\end{theo}

 It is proved in \cite{TZ3} (see Lemma 3) that unless $(M,g)$ is a flat torus, the joint  flow $\Phi^{t}$ always possesses at least one singular orbit (see also \cite{L, LS}).  In the case where $\dim M =2$ this orbit must  be one-dimensional (ie. a geodesic). Thus, the second estimate (ii) in Theorem \ref{mainthm2} is always attained in the QCI case and therefore, up to a power of $\log \lambda$,  the maximal $L^{2}$-restriction bound in \cite{BGT} is always attained.
\begin{remark}
Examples to which Theorems \ref{mainthm} and \ref{mainthm2} apply include: QCI Laplacians on ellipsoids (with distinct axes), surfaces of revolution, Liouville surfaces. Less well-known examples include QCI Laplacians associated with spherical metrics got by reducing the Goryachev-Chaplyin top as well as those constructed in \cite{DM}. In both of the last two classes of examples, the integral in involution, $p_{2}$, is a {\em cubic} polynomial in the momentum variables. Finally, there are also examples known where $p_{2}$ is {\em quartic} in the momenta \cite{Mi}. In addition, our results apply to  inhomogeneous QCI systems such as Neumann oscillators, Euler and  Kowalevsky tops and  the spherical pendulum as well as many others.
 We have stated our results for surfaces because the formulation is quite elegant in that case. It is not hard to extend the  analysis  here to higher-dimensions under the appropriate notion of a generic submanifold,  but the formulation of results becomes more cumbersome. We hope to address this elsewhere.
\end{remark}

%begin{remark}
%It would be interesting to determine whether the generic ${\mathcal O}(|\log \hbar|)$ (resp. ${\mathcal O}(\log \lambda)$  bounds in Theorems \ref{mainthm} (resp. \ref{corollary}) can be  improved to ${\mathcal O}(1)$ in the general QCI case. In simple examples such as surfaces of revolution where one can compute generic restriction bounds explicitly, this appears to be the case (see section \ref{examples}). However, at this point we do not know whether such an improvement holds in general.
%\end{remark}
\begin{remark}
 In analogy with the specific results for QCI eigenfunctions in Theorems \ref{mainthm} and \ref{mainthm2} above, it is natural to try to determine $L^{2}$ (or $L^{p}$)  eigenfunction bounds along  ``typical'' curves on general Riemann surfaces $(M^{2},g)$ by varying the standard restriction estimates over  appropriate moduli spaces of curve segments. We hope to address this point elsewhere.
\end{remark}

We thank Steve Zelditch for helpful comments and suggestions regarding an earlier version of
 the manuscript.
 
\section{Generic (joint) eigenfunction restriction bounds along curves}
 We say that $P(\hbar) \in Op_{\hbar, cl}(S^{m}_{k}(T^*M))$ if locally it has Schwartz kernel
 $$P(x,y;\hbar) = (2\pi \hbar)^{-n} \int_{{\mathbb R}^{n}} e^{i(x-y)\xi/\hbar} p(x,\xi;\hbar)  d\xi$$
 where $a(x,\xi;\hbar) \sim \sum_{j=0}^{\infty} a_{j}(x,\xi) \hbar^{-k+j}$ and
 $a_{j} \in S^{m-j}(T^{*}M)$ with $\hbar \in (0,\hbar_0]$. From now on, without loss of generality we assume that $P_{j}(\hbar) \in Op_{\hbar, cl}(S^{m}_{0})$ and that $P_{1}(\hbar)^{2} + P_{2}(\hbar)^{2}$ is elliptic in the classical sense and the $P_{j}(\hbar)$'s are self-adjoint.
 
  In this section, we get {\em generic} asymptotic bounds
   for $\int_{\gamma}|\phi_{j}(s;\hbar)|^{2} ds$ in the case where the $\phi_{j}$'s are {\em joint} eigenfunctions of $P_{1}(\hbar)$ and $P_{2}(\hbar)$. Here, the term {\em generic} refers to a non-degeneracy condition on the QCI system along the (generalized) cylinder $C_{\gamma}$ given in Lemma \ref{generic}. 
   
\subsection{Proof of Theorem \ref{mainthm}.}
   
We assume here that $(M,g)$ is a compact surface with
QCI quantum Hamiltonian given by  $P_{1}(\hbar)$ 
% where from now on we assume that the semiclassical parameter $\hbar \in \{ \lambda_j^{-1} \}_{j=1}^{\infty}$. The quantum
and the quantum integral in involution is $P_{2}(\hbar)$ where we assume that
its  principal symbol, $p_{2}$, satisfies the Morse condition in (A1). %Examples of such systems include surfaces of revolution, Liouville
%metrics on spheres (eg. triaxial ellipsoids)  and tori. Inhomogeneous examples include the quantum Neumann oscillator, quantum spherical pendulum and many others. We will discuss several of these examples in detail in the following section.
 The joint spectrum of $P_1(\hbar)$(resp. $P_{2}(\hbar)$) will be
denoted by $\lambda^{(1)}_j(\hbar)$ (resp. $\lambda^{(2)}_j(\hbar)$) with
$j=1,2,3,...$.
%As usual, we let $\hbar^{-1} \in $ Spec $\sqrt{\Delta}$ and
 Let $\rho  \in {\mathcal S}({\mathbb R})$ satisfy $\rho(u) \geq 0$ with $\rho(0)=1$ and $\hat{\rho} \in C^{\infty}_{0}([-\epsilon, \epsilon])$ with $\epsilon >0$ sufficiently small.  For fixed $x\in M$, we  form  the joint unintegrated trace attached to the level $(p_1, p_{2}) = (E_1,E_2)$ given by
\begin{equation} \label{qci1}
I_{E}(x;\hbar):= \sum_{j=1}^{\infty} \rho(
\hbar^{-1}[\lambda^{(1)}_j(\hbar) -E_1]) \,\rho( \hbar^{-1}[\lambda^{(2)}_j(\hbar)
- E]) \, | \phi_{j}(x;\hbar)|^{2}.
\end{equation}
Our task is to obtain a locally uniform asymptotic bound (in $E$)
for $\int_{a}^{b} I_{E}(x(\tau);\hbar) \, d\tau$  as $\hbar \rightarrow 0$ . Writing the
usual small-time $\hbar$-Fourier integral operator (FIO)  parametrices for $e^{itP_{1}(\hbar)}$ and
$e^{isP_{2}(\hbar)}$, and taking Fourier transforms in (\ref{qci1})
gives:
\begin{eqnarray} \label{qci2}
\,\,\,\,\,\,\,\,\,\,\,\,\,\,I_{E}(x;\hbar) = (2\pi \hbar)^{-4} \int \int \int \int \int e^{i \Phi(x,y,\xi,\eta,s;E)/\hbar} a_{\chi}(y,\eta,\xi;\hbar) \, \hat{\rho}(t)  \hat{\rho}(s) \, ds dt d\xi d\eta dy   \nonumber \\
+ {\mathcal O}(1).
\end{eqnarray}
In (\ref{qci2}),
\begin{equation} \label{phase1}
\Phi(x,y,\xi,\eta,s,t;E) = \psi_1(x,y,\xi,t) + tE_1 + \psi_2(y,x,\eta,s) +sE,
\end{equation}
where,
\begin{equation} \label{phase1}
\psi_1(x,y,\xi,t) = \phi_1(x,\xi,t) - y \xi, \,\,\, \psi_2(y,x,\eta,s) = \phi_2(y,\eta,s) -x \eta.
\end{equation}
 In (\ref{phase1}), the $\phi_{j}; j=1,2$ satisfy the usual eikonal initial value problems
 \begin{equation} \label{eikonal} 
 \partial_{t} \phi_{1} + p_{1}(x, \partial_x\phi_1) = 0, \,\,\, \phi_1|_{t=0} = x \xi, \end{equation}
 $$  \partial_{s} \phi_{2} + p_{2}(y, \partial_y\phi_2) = 0, \,\,\, \phi_2|_{s=0} = y \eta.$$
 From the equations in (\ref{eikonal}) one easily derives the following Taylor expansions for $\phi_1(t,x,\xi) $(resp. $\phi_2(s,y,\eta)$)  centered at $t=0$ (resp. $s=0$):
 \begin{equation} \label{phaseformula1}
 \phi_{1}(t,x,\xi) =  x \xi - t p_{1}(x,\xi)  + {\mathcal O}(t^{2}).
 \end{equation}
  \begin{equation} \label{phaseformula2}
 \phi_{2}(s,y,\eta) = y \eta - s p_{2}(y,\eta)  + {\mathcal O}(s^{2}).
 \end{equation}
 In (\ref{qci2}) the amplitude is of the form
%\begin{equation} \label{phase2}
%\psi_k(x,y,\xi) = \langle x-y, \xi \rangle + {\mathcal O}(|x-y|^{2}; 
%|\xi|); \,\,\,\,k=1,2,
%\end{equation}
%and
\begin{equation} \label{amplitude1}
 a_{\chi} (y,x,\eta,\xi,s,t;\hbar)
= \chi(E_1-p_{1}(x,\xi))  \chi(E-p_{2}(y,\eta)) \chi(y - x)
a(x,y,\eta,\xi,s,t;\hbar),
\end{equation}
where, $ a \sim \sum_{j=0}^{\infty} a_{j} \hbar^{j}$, $a_{0} \geq
\frac{1}{C_{0}} >0$ and $\chi \in C^{\infty}_{0}(R)$ with $\chi =1$
near the origin.  

%Since the support of $\chi$ in (\ref{amplitude1}) can
%be taken arbitrarily small, by making a Kuranishi-type change of
%variables of the form $ \xi \mapsto \xi( 1 + {\mathcal O}(|x -
%y|))$ and $ \eta \mapsto \eta ( 1 + {\mathcal O}(|x - y|)$, it
%ollows that:
%\begin{eqnarray} \label{qci5}
%I_{E}(x;\hbar) = (2\pi \hbar)^{-4} \int \int \int \int \int e^{i \tilde{\Phi}(x,y,\xi,\eta,s,t;E) /\hbar} b_{\chi}(y,\eta,\xi;\hbar) \nonumber \\
%\times \hat{\rho}(t)
%\hat{\rho}(s) \, ds dt d\xi d\eta dy  + {\mathcal O}(1),
%\end{eqnarray}
%where,
%\begin{equation} \label{phase2}
%\tilde{\Phi}(x,y,\xi,\eta,s,t;E) = \langle x -
%y, \xi - \eta \rangle + t(1-p_{1}(x,\xi) +  + s(E-
%p_{2}(y,\eta),
%\end{equation}

 Since the integral in (\ref{qci2}) is absolutely convergent, we carry out the $(y,\eta)$-integration first and get that
\begin{equation} \label{qci5}
I_{E}(x;\hbar) = (2\pi \hbar)^{-4} \int \int \int  \exp  \, [ it (E_1-p_{1})(x,\xi) + {\mathcal O}(t^2) /\hbar ] 
\end{equation}
 $$ \times \, \hat{\rho}(t) \, I(s,t,x,\xi;\hbar) \, ds d\xi dt,$$
where,
\begin{equation} \label{fubini}
 I(s,t,x,\xi;\hbar):= \int \int
e^{i\Phi(y,\eta;x,s)/\hbar} b_{\chi}(y,\eta,x,\xi,s,t;\hbar) 
 \hat{\rho}(s)
dy d\eta,
\end{equation}
and where $b \in S^{0}(1)$ with $b \sim \sum_j b_{j} \hbar^{j}, \,\,\, b_0 \geq 1/C_{0}>0$
and $b_{\chi}$ has the same properties  as $a_{\chi}$ in (\ref{amplitude1}).
The phase function
\begin{equation} \label{qci6}
\Phi(y,\eta;x,s) =  \langle x-y, \xi - \eta \rangle + s(E-p_{2}(y,\eta)) + {\mathcal O}_{y,\eta}(s^{2}).
\end{equation}

Since $ \det (\Phi''_{y,\eta}) = 1 + {\mathcal O}(s)$ and the $s$-support of $b_{\chi}$ can be taken arbitrarily small, one can apply stationary phase (with parameters) in the $(y,\eta)$-variables  in (\ref{fubini}). The critical point equations for $(y,\eta)$ are 
$$ \eta = \xi + s  \, \partial_{y}p_{2}(y,\eta)  + {\mathcal O}(s^{2}) \,\,\,\,\,\,\, (*)$$
$$ y = x + s \, \partial_{\eta}p_{2}(y,\eta) + {\mathcal O}(s^{2}) .$$
 By a straightforward computation,   
%$$ \Phi_{red}(t,s,x,\xi) =  s^{2} \partial_{x}p_{2} \cdot \partial_{\xi}p_{2}(x,\xi) + s(E - p_{2}(x,\xi)) -2  s^{2} (\partial_x p_2 \partial_{\xi} p_2 (x,\xi)     $$
%$$ + \frac{s^{2}}{2}  (\partial_x p_2 \partial_{\xi} p_2 (x,\xi) $$
%$$=    s(E - p_{2}(x,\xi)) - \frac{s^{2}}{2} (\partial_x p_2 \partial_{\xi} p_2 (x,\xi) +{\mathcal O}_{x,\xi}(s^{3}).$$
$I_{E}(x,\hbar)$ equals
\begin{equation} \label{qci7}
 (2\pi \hbar)^{-2} \int \int \int \exp \, [i
t(1-p_{1})(x,\xi) + s(E- p_{2})(x,\xi)   +  {\mathcal
O}_{x,\xi}(s^{2}) + {\mathcal O}_{x,\xi}(t^{2})/\hbar ]  \,
\end{equation}
$$ \times c(x,\xi,s,t;\hbar) d\xi dt ds   + {\mathcal O}(1),$$
where, $c \in S^{0}_{cl}(1)$ with $c(x,\xi,s,t;\hbar) \sim
\sum_{j=0}^{\infty} c_{j}(t,s,\xi) \hbar^{j}$ where the $c_{j} \in C^{\infty}_{0}$.

Next, we make a polar variables decompostion in
the $\xi$-variables in (\ref{qci7}), which is legitimate since by assumption, $p_{1}$ is real principal type on the energy shell $p_{1}^{-1}(E_{1})$ and so,  $|\partial_{\xi} p_{1}| \geq \frac{1}{C}>0$ when $p_{1} \sim E_{1}$ and  supp $c \subset [-\epsilon, \epsilon]^{2} \times p_{1}^{-1}[E_{1}-\epsilon, E_{1}+\epsilon]$. We note that in the case of a Schrodinger operator, $p_{1}(x,\xi) = |\xi|_{g}^{2} + V(x)$ and so, $2\xi \partial_{\xi} p_{1} = |\xi|_{g}^{2}$ by Euler homogeneity. So, as long as 
$$\gamma \cap \{ x \in M; V(x) = E_1 \} = \emptyset \,\,\,\,(*)$$
 the condition $\partial_{\xi} p_{1}(x,\xi) \neq 0$ is satisfied for $(x,\xi) \in p_{1}^{-1}(E_1) \cap \pi^{-1}(\gamma)$. In the homogeneous case, where $p_{1} = |\xi|_{g}$ and $E_1 =1$ the condition $(*)$ is clearly automatic. 
 
% n the inhomogeneous case, the point of  $(*)$ is to ensure that when testing eigenfunction mass along curves, $\gamma$, one should avoid  the measure zero subset $\{x \in M; V(x) = E_{1} \}$ of  the zero section $M \subset T^{*}M.$ 
 
   Since by assumption $\partial_{\xi} p_{1} \neq 0 $ near $p_{1}^{-1}(E_{1})$ we can choose $p_1$ as a local coordinate on $ \pi^{-1}(x)$ near $(x,\xi_{0}) \in p_{1}^{-1}(E_1)$. Then, we put $p_{1} = E_1  r$ and extend it to a local coordinate system $(r,\omega): \pi^{-1}(x) \rightarrow {\mathbb R}^{2}$ near  $(x,\xi_0) \in p_{1}^{-1}(E_1)$.  Cover  a neighbourhood of $\pi^{-1}(x) \cap p_{1}^{-1}(E_1)$ by small open sets and choose a partition of unity subordinate to the covering. Then, make the  change of variables  $(p_{1},\omega) \mapsto \xi$ in each open set and sum over the partition to get
\begin{equation} \label{qci8}
I_{E}(x,\hbar) = (2\pi \hbar)^{-2} \int \int \int \int \exp \, i \, [ 
t(E_1-r) + s (E- p_{2}(x,r \omega)) + {\mathcal O}_{x,\xi}(s^{2}) + {\mathcal O}_{x,\xi}(t^{2})/\hbar] 
\end{equation}
 $$ \times c(x,r \omega,s,t;\hbar) r dr d\omega dt ds + {\mathcal O}(1).$$
where $\omega \in p_{1}^{-1}(E_1) \cap \pi^{-1}(x)$ is a (generalized) angle variable and $d\omega = |\nabla_{\xi}p_{1}(x,\xi)|^{-1} d\xi$ denotes Liouville measure on $p_{1}^{-1}(E_1) \cap \pi^{-1}(x)$. One final application of stationary phase in the $(r,t)$-variables in (\ref{qci8}) gives
\begin{equation} \label{qci9}
I_{E}(x,\hbar) = (2\pi \hbar)^{-1} \int \int  \exp \, i [  s \, ( E- p_{2}(x, \omega)  ) + {\mathcal O}(s^{2}) ] /\hbar] 
\end{equation}
$$ \times c(x, E_1 \omega,s;\hbar)  d\omega  ds + {\mathcal O}(1).$$

The remainder of the proof of Theorem \ref{mainthm} involves integrating the restriction of $I_{E}(x;\hbar)$ in (\ref{qci8}) to $x= x(\tau) \in \gamma$ and then carrying out a detailed analysis of the result under the generic Morse condition in (\ref{generic}).
From (\ref{qci8}),
\begin{equation} \label{mainintegral}
\int_{a}^{b} I_{E}(x(\tau);\hbar) \, d\tau = (2\pi \hbar)^{-1} \int \int \int  \exp \, i [   s \, ( E- p_{2}(x(\tau)  \omega)  )  +{\mathcal O}(s^{2}) ] /\hbar] 
\end{equation}
$$ \times c(x(\tau),  \omega,s;\hbar)  d\omega d\tau ds +  {\mathcal O}(1).$$
\begin{equation} \label{mainintegral2}
=(2\pi \hbar)^{-1} \int  e^{isE/\hbar} \,I_{\gamma}(s;\hbar) \, ds + {\mathcal O}(1).
\end{equation}
In (\ref{mainintegral}) we now absorb the $O(s^{2})$-term into the phase and write  $p_{2}(x(\tau),\omega;s) = p_{2}(x(\tau),\omega) + {\mathcal O}(s),$ uniformly in $(\tau,\omega) \in [a,b] \times S^{1}$. We have also applied Fubini to ensure that the $s$-integral is carried out last since  we want to maintain uniformity in the energy values $E$. By  carrying out  the $s$-integration last, $E$ will always appear in a harmless, linear fashion in the phase {\em only}.  As a result, the uniformity of the estimates for (\ref{mainintegral}) in $E$ is clear.
So, for $\hbar \in (0,\hbar_{0}]$ it remains to estimate the integral:
\begin{equation} \label{mainintegral3}
I_{\gamma}(s;\hbar) =  \int_{C_{\gamma}}  e^{ -i s p_{2}(x(\tau), \omega;s) /\hbar} c(x(\tau),\omega,s;\hbar)  d\omega d\tau .
\end{equation}

Because of the Morse assumption (\ref{generic}), the $(\omega,\tau)$-critical points of $p_{2}(x(\tau),\omega)$ are isolated and so, without loss of generality, we assume that there is a single critical point at $(\tau_{0},\omega_{0})$. Let $B_{0} \subset C_{\gamma}$ be a small neighbourhood of $(\tau_{0},\omega_{0})$. Let $\chi_{j} \in C_{0}^{\infty}(C_{\gamma}); \,\, j=0,1$ be a partition of unity subordinate to  a covering $B_{0} \cup B_1$ of $C_{\gamma}$.
We split up the integral
\begin{equation} \label{split1}
I_{\gamma}(s;\hbar) 
=\int \int e^{-i s p_{2}(x(\tau), \omega;s)  /\hbar} c(x(\tau),\omega,s;\hbar)  \, \chi_{0}(\tau,\omega) \, d\omega d\tau 
\end{equation}
$$ + \int \int e^{-i s p_{2}(x(\tau), \omega;s)  /\hbar} c(x(\tau),\omega,s;\hbar)  \, \chi_{1}(\tau,\omega) \, d\omega d\tau =: I_{\gamma}^{(0)}(s;\hbar) +  I_{\gamma}^{(1)}(s;\hbar).$$
  First, we deal with the second integral  $I_{\gamma}^{(1)}(s;\hbar)$  on the RHS of (\ref{split1}): For $(\tau, \omega) \in $ supp $\chi_{1}$, we have that for $|s|$ sufficiently small,
 \begin{equation} \label{max}
 \max \,\, ( \, |\partial_{\tau} p_{2}( x(\tau), \omega;s)| ,  |\partial_{\omega} p_{2}( x(\tau), \omega;s)| \, ) \geq \frac{1}{C_{0}} >0,
 \end{equation}
 By the implicit function theorem,  in the case where $|\partial_{\omega} p_{2}( x(\tau), \omega;s)| \geq \frac{1}{C_0}$ one can make a local change of variables $(\tau, \omega) \mapsto (\tau, p_{2}(x(\tau),\omega;s) )$ in $I_{\gamma}^{(1)}(s;\hbar)$. Alternatively, when $|\partial_{\tau} p_{2}( x(\tau), \omega;s)| \geq \frac{1}{C_0}$ one can make the make the change of variables $(\tau, \omega) \mapsto (p_{2}(x(\tau),\omega;s), \omega) )$. So, in either case after making a  change of variables, one gets
 \begin{equation} \label{residualintegral}
 (2\pi \hbar)^{-1} \int e^{iEs/\hbar} I_{\gamma}^{(1)}(s;\hbar) ds = (2\pi \hbar)^{-1} \int \int \int e^{is(E-\theta)/\hbar} \tilde{c}_{1}(s,\theta,v;\hbar) \, d\theta dv ds.
 \end{equation}
 where, again $\tilde{c} \in S^{0}_{cl}(1)$ with compact support in all variables. Finally, another application of stationary phase in the $(s,\theta)$-variables gives
 \begin{equation} \label{residual2}
 (2\pi \hbar)^{-1} \int e^{iEs/\hbar} I_{\gamma}^{(1)}(s;\hbar) ds  = {\mathcal O}(1).
\end{equation}
Moreover, the ${\mathcal O}(1)$-bound on the RHS in (\ref{residual2}) is clearly uniform in $E$.

 We now deal with $I_{\gamma}^{(0)}(s;\hbar)$.   The Morse assumption and implicit function theorem imply that he critical point equations 
 $$ \partial_{\tau} p_{2}(x(\tau),\omega;s) =0, \,\,\, \partial_{\omega} p_{2}(x(\tau),\omega;s) = 0,$$
 $$\tau(0) = \tau_0, \,\,\,\, \omega(0) = \omega_0$$
 have unique local solutions $\tau(s)$ and $\omega(s)$ which are smooth for $|s|\leq \frac{1}{C}$ with $C>0$ sufficiently large. We apply stationary phase in $(\tau,\omega)$  to expand the first integral $I_{\gamma}^{(0)}(s;\hbar)$
on the RHS of (\ref{split1}). First, we split up the domain of $s$-integration and write
\begin{equation} \label{split3}
\int_{C_\gamma} e^{is p_{2}(x(\tau),\omega;s)/\hbar} \chi_{0}(\tau,\omega) c(x(\tau),\omega,s;\hbar) \, d\tau d\omega =\int_{C_\gamma} {\bf 1}_{|s| \leq \hbar} e^{is p_{2}(x(\tau),\omega;s)/\hbar} \chi_{0}(\tau,\omega) c(x(\tau),\omega,s;\hbar) \, d\tau d\omega  \end{equation}
$$+ \int_{C_\gamma}  {\bf 1}_{|s|\geq \hbar} e^{is p_{2}(x(\tau),\omega;s)/\hbar} \chi_{0}(\tau,\omega) c(x(\tau),\omega,s;\hbar) \, d\tau d\omega. $$
Clearly,
\begin{equation} \label{easy}
(2\pi \hbar)^{-1} \int_{|s| \leq \hbar} e^{iEs/\hbar} I_{\gamma}^{(0)}(s;\hbar) ds = {\mathcal O}(1).
\end{equation}
An application of stationary phase with parameters (\cite{Ho} Theorem 7.7.5) in the second integral gives
\begin{equation} \label{sp}
 {\bf 1}_{|s| \geq \hbar} I_{\gamma}^{(0)}(s;\hbar) = \hbar s^{-1} \, c _{0}(x(\tau(s)),\omega(s),s)  \, {\bf 1}_{|s| \geq \hbar} \, \exp \left[ \,i s  p_{2}(x(\tau(s)),\omega(s);s)/\hbar \, \right]
 \end{equation}
 $$ + {\mathcal O}(|s|^{-2} \hbar^{2}).$$
 So, integrating (\ref{sp}) over $\{s;  1 \geq |s| \geq \hbar \}$ gives
 \begin{equation} \label{upshot}
\left| (2\pi \hbar)^{-1} \int_{1 \geq |s| \geq \hbar} e^{iEs/\hbar} I_{\gamma}^{(0)}(s;\hbar) ds \right|  \leq C_{1} \hbar^{-1} \int_{1 \geq |s|\ \geq \hbar} \frac{\hbar}{s} ds + C_{2} \hbar^{-1} \int_{1 \geq |s| \geq \hbar}  \frac{\hbar^{2}}{s^{2}} ds
\end{equation}
$$ = {\mathcal O}(|\log \hbar|) + {\mathcal O}(1) = {\mathcal O}(|\log \hbar|). $$
Combining (\ref{residual2}), (\ref{easy}) and  (\ref{upshot}) and using the fact that each of these estimates is uniform in $E$ implies that
for $\hbar \in (0, \hbar_{0}],$
$$ \sup_{ \{ E;  \, (E_{1},E) \in {\mathcal P}(T^*M), \, \}} \int_{a}^{b} I_{E}(x(\tau);\hbar)  \, d\tau = {\mathcal O}(|\log \hbar|).$$
Rewriting this gives the estimate
\begin{equation} \label{trace bound}
 \sup_{ \{ E;  \, (E_{1},E) \in {\mathcal P}(T^*M) \} } \sum_{j} \rho(\hbar^{-1}[\lambda_{j}^{(1)}(\hbar)-E_1] ) \rho(\hbar^{-1}[\lambda_{j}^{(2)}(\hbar)-E]) \times \int_{\gamma} |\phi_{j}^{\hbar}|^{2} ds = {\mathcal O}(|\log \hbar|).
\end{equation}
 We claim that for  any $\hbar \in (0,\hbar_0],$ and  $(\lambda_j^{(1)}(\hbar), \lambda_j^{(2)}(\hbar)) \in \mbox{Spec} (P_1(\hbar), P_2(\hbar)),$ with $|\lambda_j^{(1)}(\hbar) - E_1|\leq C_1\hbar$, there exists $C_2>0$ such that
$$ \inf_{E}   | \lambda_j^{(2)}(\hbar) - E| ) \leq C_2 \hbar.  \,\,\, (*)$$  To see this, we argue by contradiction:  Assume that $(*)$ does not hold.  Then there exists   a sequence $(\hbar_m)_{m=1}^{\infty}$with $\hbar_m \rightarrow 0^+$ as $m \rightarrow \infty$  for which $(*)$ is violated. Let $\epsilon \in (\hbar_m)_{m=1}^{\infty}$ and treat $\epsilon >0$ as an adiabatic parameter. Consider the $\epsilon$-pseudodifferential operator $P(\epsilon):= \epsilon^{-2}  \, [ \, P_{1}(\epsilon) - \lambda_j^{(1)}(\epsilon)  ] ^{2} + \epsilon^{-2} \, [  P_{2}(\epsilon) - \lambda_{j}^{(2)}(\epsilon ) ]^{2}$ and put
  $p_{k,\epsilon} = \epsilon^{-2}  \, ( \, p_k - \lambda_j^{(k)}(\epsilon) \, )^{2} \, $ and $P_{k,\epsilon} = \epsilon^{-2}  \ [  P_k(\epsilon)  - \lambda_j(\epsilon) \, ]^{2}; \, k=1,2.$
     So, then our assumption implies that for any $\epsilon  \in (\hbar_m)_{m=1}^{\infty},$ 
        $$ p_{2,\epsilon}(x,\xi) \geq C_2^{2} \,\, \mbox{when} \,\,  p_{1,\epsilon} (x,\xi) \leq C_1^{2} $$
 and thus  $P(\epsilon)= P_{1,\epsilon} + P_{2,\epsilon} $ is $\epsilon$-elliptic.
 % Let $\chi_{\epsilon} \in C^{\infty}_{0}(B_1)$ with $\chi_{\epsilon} =1$ on $B_{1/2}$ where $B_r = \{ (x,\xi) \in T^{*}M;  p_{1,\epsilon}(x,\xi) \leq r \}$. The operator $P_{2,\epsilon }$  is  $\epsilon$-elliptic on supp $\chi_{\epsilon}$ and so
  
One then constructs an $\epsilon$-parametrix $Q(\epsilon)$ with
   $$ Q(\epsilon)  P(\epsilon)  = Id + {\mathcal O}(\epsilon^{\infty})_{L^{2} \rightarrow L^{2}}.  $$
    Applying $ Q(\epsilon)$ to both sides of the equation $P(\epsilon) \phi_{j}^{(\epsilon) } = 0$  implies that 
\begin{equation}\label{mass1}
\|  \phi_j^{(\epsilon) } \|_{L^{2}} = {\mathcal O}(\epsilon^{\infty}). 
\end{equation}
%On the other hand, $P_{1,\epsilon}$ is $\epsilon$-elliptic on supp $(1- \tilde{\chi}_{\epsilon})$ and so given the equation 
%$$ P_{1,\epsilon}( Id - \tilde{\chi}_{\epsilon}) \phi_{j}^{\hbar_0(\epsilon)} = {\mathcal O}(\epsilon^{\infty}),$$
%one again constructs an $\epsilon$-microlocal  parametrix and gets that 
%\begin{equation} \label{mass2}
%\| (Id - \tilde{\chi}_{\epsilon}) \phi_{j}^{\hbar_0(\epsilon)} \|_{L^{2}}  = {\mathcal O}(\epsilon^{\infty}).
%\end{equation}
But since $\epsilon >0$ can be taken arbitrarily small,  (\ref{mass1})    contradicts the fact that all joint eigenfunctions are $L^{2}$-normalized.

So, after possibly rescaling $\rho$ and using that   $\rho \geq 0$ with $\rho(0) =1$ it follows from $(*)$ that for all $j \geq 1$ and $\hbar \in (0,\hbar_{0}],$  there exists a constant $C_{3}>0$ (independent of $j$) such that
$$\sup_{ \{ E; (E_{1},E) \in {\mathcal P}(T^*M), \, |E_1 - \lambda_j^{(1)}(\hbar)| \leq C_1 \hbar \} } \rho(\hbar^{-1}[\lambda_{j}^{(2)}(\hbar)-E]) \geq C_{3}>0.$$
Since the sum on the LHS of (\ref{trace bound}) has non-negative terms,  by restricting to  $\{ j; |\lambda_{j}^{(1)}(\hbar) - E_{1}| \leq C_{1} \hbar \}$   and  (after possibly rescaling $\rho$) using that 
$\rho(\hbar^{-1}[\lambda_{j}^{(1)}(\hbar) - E_1]) \geq C_5 >0$
 for these eigenvalues, one finally gets that 
  $$  \sum_{ \{j; |\lambda_{j}^{(1)}(\hbar) - E_1| = O(\hbar) \} }  \int_{\gamma} |\phi_{j}^{\hbar}|^{2} ds = {\mathcal O}_{|\gamma|}( |\log \hbar|).$$

%It follows that
%$$ \sum_{j} \rho (\hbar^{-1}[\lambda_{j}^{(1)}(\hbar)-E_1])  \times \int_{\gamma} |\phi_{j}^{\hbar}|^{2} ds = {\mathcal O}(|\log \hbar|).$$
This     finishes the proof of Theorem \ref{mainthm}. 
\qed

\begin{remark} The sup bound in Theorem \ref{mainthm} is also uniform in the energy parameter, $E_{1}.$  However, for different values of $E_{1}$ one needs to excise different subvarieties of $M$ (which depend on $E_{1}$) to ensure that $p_{1}$ is real principal type on $p_{1}^{-1}(E_{1})$.  For example, in the case where $p_{1} = |\xi|_{g}^{2} + V(x),$ assumption (A1) requires that $\gamma \cap \{x \in M; V(x) = E_{1} \} = \emptyset.$
\end{remark}
\section{Non-generic curves} \label{nongeneric}
In this section, we turn to the proof of Theorem \ref{mainthm2}. In contrast to Theorem \ref{mainthm}, this results deals with the $L^{2}$-restriction bounds of joint eigenfunctions of $P_{1}(\hbar)$ and $P_{2}(\hbar)$ with $\hbar$-microsupports along singular orbits of the joint bicharacteristic flow of $H_{p_1}$ and $H_{p_2}$. We show that, up to $\log \hbar$-factors, the maximal $L^{2}$-restriction bound in \cite{BGT} is attained along the base projections of these orbits. In the special homogeneous case, these projections are certain (exceptional) geodesics. For example, as we discuss in section \ref{examples}, in the case of surfaces of revolution, the equator is such an exceptional geodesic. It is however the {\em only} exceptional geodesic: all other geodesics including the meridian great circles are generic in the sense of Definition \ref{generic}.

 Just as in the previous section, the analysis boils down to estimating the integral $I_{\gamma}(s;\hbar)$. However, unlike the generic case, the phase function $\Psi(\tau,\omega) \in C^{\infty}_{\gamma}$ will now have degenerate critical points and we will use a change of variables to classical Birkhoff normal form along these singular orbits to compute the asymptotics.

\subsection{Orbits of the joint flow, $\Phi^{t}$.}
Here, we describe an important class of  exceptional curves, $\gamma$, which
 do not satisfy (\ref{generic}).  As we have already pointed out in the introduction, it is not difficult to see that in the homogeneous case, geodesics  are distinguished as far $L^p$-restriction bounds are concerned  (see for example \cite{BGT}). In the QCI case, the same is true for the bicharacteristics of general inhomogeneous Hamiltonians. Moreover, as we will show,  the {\em nature} of bicharacteristics vis-a-vis the singular Lagrangian foliation of $T^{*}M$ also plays a very important role as far restriction bounds are concerned. 
 First, we give a slightly different characterization of what it means for a curve $ \gamma$ to be generic. This consists of a series of simple but important geometric lemmas, the main result being Proposition \ref{mainprop}.  
 
 Fix a smooth curve $\gamma:[a,b] \rightarrow {\mathbb R}$ and let $(\tau(0), \omega(0)) \in C_{\gamma}$ be any point on the cylinder.  We define $ \Psi:C_{\gamma} \rightarrow {\mathbb R}$ by $\Psi = \iota^{*} p_2,$ where
 $\iota: C_{\gamma} \rightarrow T^*M$ is the standard inclusion map.
So, in terms of the local coordinates $(\tau, \omega): C_{\gamma} \rightarrow {\mathbb R}^{2}$, 
$$ \Psi(\tau,\omega) = p_{2}(x(\tau), \omega).$$
Modulo an ${\mathcal O}(s)$-error (which is negligible), this is the phase function in (\ref{mainintegral3}),  in the integral $I_{\gamma}(s;\hbar)$.

 The point  $(\tau(0), \omega(0))$ is  critical for $\Psi: C_{\gamma} \rightarrow {\mathbb R}$ if for every smooth curve segment $\mu(s):=(\tau(s), \omega(s)) \in C_{\gamma}; \, s \in (-\epsilon, \epsilon)$ passing through the initial point,
 \begin{equation} \label{gen1}
 \frac{\partial}{\partial s} \Psi(\tau(s), \omega(s))|_{s=0}  =  0.
 \end{equation}
 Since $\Psi(\tau(s),\omega(s)) = p_{2}(\tau(s),\omega(s))$, writing (\ref{gen1}) out explicitly and applying the chain rule gives:
 \begin{equation} \label{gen2}
 \partial_{x} p_{2} \cdot \partial_{s} \tau|_{s=0}  + \partial_{\xi} p_{2} \cdot \partial_{s} \omega |_{s=0} = 0.
 \end{equation}
 On the other hand, differentiating the definining equation $p_{1}(\tau(s), \omega(s)) = 1$ gives
 \begin{equation} \label{gen3}
 \partial_{x} p_{1} \cdot \partial_{s} \tau|_{s=0}  + \partial_{\xi} p_{1} \cdot \partial_{s} \omega |_{s=0} = 0.
 \end{equation}
 %From (\ref{gen1}) and (\ref{gen2}) it is immediate that
 %\begin{lem} \label{generic2}
 %A point $(\tau(0),\omega(0)) \in C_{\gamma}$ is a  critical point of the  function
% $\Psi: C_{\gamma} \rightarrow {\mathbb R}$ if and only if
 %$$ dp_{j}(\tau(0),\omega(0)) \in N^{*}_{(x(0),\omega(0))} (C_{\gamma}); \,\, j=1,2.$$
 %\end{lem}
  %As a consequence of Lemma \ref{generic2} we have
  The following lemma is  an immediate consequence of (\ref{gen2}) and (\ref{gen3}).
  \begin{lem} \label{generic2}
 A point $z_{0} = (\tau(0),\omega(0)) \in C_{\gamma}$ is critical for $\Psi: C_{\gamma} \rightarrow {\mathbb R}$ if and only if
 $$ T_{z_0}C_{\gamma} \subset \ker (dp_{1})(z_{0}) \cap \ker (dp_{2})(z_{0}).$$
\end{lem}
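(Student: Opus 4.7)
The plan is to reinterpret equations (\ref{gen2}) and (\ref{gen3}) in coordinate-free form, after which the lemma becomes essentially a tautology. Fix $z_0 = (\tau(0),\omega(0)) \in C_\gamma$ and let $\mu(s) = (\tau(s),\omega(s))$ be an arbitrary smooth curve in $C_\gamma$ passing through $z_0$ at $s=0$, with velocity vector $v := \mu'(0) \in T_{z_0}C_\gamma$. As $\mu$ varies over such curves, $v$ ranges over all of $T_{z_0}C_\gamma$.

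First I would observe that the inclusion $T_{z_0}C_\gamma \subset \ker(dp_1)(z_0)$ is automatic, with no critical-point hypothesis needed. Indeed, since $C_\gamma \subset p_1^{-1}(E_1)$ (by its very definition in (\ref{cylinder})), the function $p_1 \circ \mu(s) \equiv E_1$ is constant, and differentiating at $s=0$ gives $(dp_1)(z_0) \cdot v = 0$. In coordinates, this is exactly (\ref{gen3}). Hence the first kernel in the intersection is always present, and the content of the lemma is entirely about the second kernel.

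Next I would translate the critical-point condition for $\Psi$. By the chain rule, for any smooth curve $\mu$ in $C_\gamma$ through $z_0$,
$$ \frac{\partial}{\partial s}\Psi(\mu(s))\Big|_{s=0} \;=\; \frac{\partial}{\partial s}(p_2\circ\iota\circ\mu)(s)\Big|_{s=0} \;=\; (dp_2)(z_0)\cdot v,$$
which is precisely (\ref{gen2}). So $z_0$ is a critical point of $\Psi: C_\gamma \to \mathbb{R}$ if and only if $(dp_2)(z_0)\cdot v = 0$ for every $v \in T_{z_0}C_\gamma$, that is, $T_{z_0}C_\gamma \subset \ker(dp_2)(z_0)$.

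Combining the two observations yields the claimed equivalence: $z_0$ is critical for $\Psi$ iff $T_{z_0}C_\gamma \subset \ker(dp_2)(z_0)$, and since $T_{z_0}C_\gamma \subset \ker(dp_1)(z_0)$ holds unconditionally, this is equivalent to $T_{z_0}C_\gamma \subset \ker(dp_1)(z_0) \cap \ker(dp_2)(z_0)$. There is no genuine obstacle here; the only point to be careful about is verifying that a generic velocity vector in $T_{z_0}C_\gamma$ arises from some smooth curve in $C_\gamma$, which follows from assumption (A1) (guaranteeing $C_\gamma$ is a smooth submanifold of $T^*M$ near $z_0$, so that its tangent space is spanned by velocities of such curves).
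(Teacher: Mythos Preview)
Your proof is correct and follows essentially the same approach as the paper: the paper simply states that the lemma is ``an immediate consequence of (\ref{gen2}) and (\ref{gen3}),'' and your argument spells out precisely why those two equations translate into the tangent-space inclusion (with (\ref{gen3}) giving the automatic $\ker(dp_1)$ inclusion and (\ref{gen2}) characterizing criticality via $\ker(dp_2)$). Your additional remark that (A1) ensures $C_\gamma$ is a smooth submanifold, so that every tangent vector arises as a curve velocity, is a fair point of rigor that the paper leaves implicit.
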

%As a simple consequence of Lemma \ref{generic2} we highlight the following special case.
%\begin{lem} \label{generic3}
%When $C_{\gamma} \subset  \Lambda(E_{1},E) := \{ (x,\xi)\in T^*M; p_{1}(x,\xi) = E_{1}, p_{2}(x,\xi) = E \}$, the base space projections $\pi(\mu)$  of {\em all} smooth curves $\mu:[a,b] \rightarrow C_{\gamma}$ are non-generic.
%\end{lem}
%\begin{proof} In this case, the condition in Lemma \ref{generic2} is clearly satisfied at {\em all} points $z \in  C_{\gamma}$.  The function $\Psi = E$ and so  it is not Morse.
%\end{proof} 
The following simple geometric result is central to our proof of Theorem \ref{mainthm2} since it describes the bicharacteristics that are non-generic.
\begin{proposition} \label{mainprop}
Let $\gamma \subset \pi(\tilde{\gamma})$ where $\tilde{\gamma} = \vartheta(z_{0})$ is a joint orbit of $\exp t_{j} H_{p_j}; j=1,2$ through the point $z_{0} \in C_{\gamma}$ with $\dim \tilde{\gamma} \geq 1$.  Then, if $ \tilde{\gamma} \subset C_{\gamma},$ the curve $\gamma$    is {\em not} generic.
\end{proposition}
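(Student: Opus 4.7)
The plan is to show that every point of $\tilde\gamma$ is a critical point of the restricted function $\Psi = \iota^{*}p_{2}$ on $C_{\gamma}$. Since $\dim\tilde\gamma \geq 1$, this will exhibit a positive-dimensional critical locus for $\Psi$ and thereby violate the Morse requirement in Definition \ref{generic}, forcing $\gamma$ to be non-generic.

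The first step is to invoke Lemma \ref{generic2}: $z_{0} \in C_{\gamma}$ is critical for $\Psi$ precisely when $T_{z_{0}}C_{\gamma} \subset \ker dp_{1}(z_{0}) \cap \ker dp_{2}(z_{0})$. Because $C_{\gamma} \subset p_{1}^{-1}(E_{1})$, the inclusion into $\ker dp_{1}$ is automatic, so I would only need to check $dp_{2}|_{T_{z_{0}}C_{\gamma}} = 0$ for each $z_{0} \in \tilde\gamma$. The tangent direction along the orbit is easy: since $\tilde\gamma \subset C_{\gamma}$ is a joint orbit, both Hamilton vectors $H_{p_{1}}(z_{0})$ and $H_{p_{2}}(z_{0})$ lie in $T_{z_{0}}\tilde\gamma \subset T_{z_{0}}C_{\gamma}$, and they annihilate $dp_{1}$ and $dp_{2}$ by commutativity, $dp_{i}(H_{p_{j}}) = \{p_{j},p_{i}\} = 0$. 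In the case $\dim\tilde\gamma = 2$ these already span the $2$-dimensional $T_{z_{0}}C_{\gamma}$ and the verification is immediate.

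The main obstacle is the singular case $\dim\tilde\gamma = 1$, where $T_{z_{0}}\tilde\gamma$ supplies only one null direction of $d\Psi$ and a second must be extracted from the vertical fibre. Here I would use that $\dim\tilde\gamma = 1$ forces $H_{p_{1}}(z_{0})$ and $H_{p_{2}}(z_{0})$ to be proportional: $H_{p_{2}}(z_{0}) = c\,H_{p_{1}}(z_{0})$ for some scalar $c$. Reading off the $\partial_{\xi}$-components of $H_{p_{i}} = \partial_{\xi}p_{i}\,\partial_{x} - \partial_{x}p_{i}\,\partial_{\xi}$ gives $\partial_{\xi}p_{2}(z_{0}) = c\,\partial_{\xi}p_{1}(z_{0})$. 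Assumption (A1) guarantees $\partial_{\xi}p_{1}(z_{0}) \neq 0$, so $H_{p_{1}}(z_{0})$ is non-vertical and the $1$-dimensional vertical subspace $V(z_{0}) := T_{z_{0}}\bigl(\pi^{-1}(\pi(z_{0}))\cap p_{1}^{-1}(E_{1})\bigr)$ is complementary to $\mathbb{R}\,H_{p_{1}}(z_{0})$ inside $T_{z_{0}}C_{\gamma}$. For any $v = v_{\xi}\partial_{\xi} \in V(z_{0})$, the constraint $\partial_{\xi}p_{1}\cdot v_{\xi} = 0$ then propagates to $dp_{2}(v) = \partial_{\xi}p_{2}\cdot v_{\xi} = c\,\partial_{\xi}p_{1}\cdot v_{\xi} = 0$, closing the verification.

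Once every $z_{0} \in \tilde\gamma$ has been exhibited as a critical point of $\Psi$, the critical set of $\Psi$ contains the connected positive-dimensional submanifold $\tilde\gamma \subset C_{\gamma}$, so $\Psi$ is not Morse and $\gamma$ is not generic. The only genuinely delicate step is the proportionality argument in the singular case: it is the algebraic content of "$\tilde\gamma$ is a singular joint orbit" that converts a $\ker dp_{1}$ condition on the vertical fibre into a $\ker dp_{2}$ condition, and without this one would only get that $d\Psi$ has a single null direction at $z_{0}$.
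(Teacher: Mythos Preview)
Your argument is correct and follows essentially the same strategy as the paper: invoke Lemma \ref{generic2}, separate the cases $\dim\tilde\gamma = 2$ and $\dim\tilde\gamma = 1$, and in the singular case use the proportionality of $H_{p_1}$ and $H_{p_2}$ to push the $\ker dp_1$ condition over to $\ker dp_2$. The paper's execution in the singular case is marginally shorter: from $H_{p_2} = \lambda H_{p_1}$ it passes directly to $dp_2 = \lambda\, dp_1$, so $\ker dp_1 \cap \ker dp_2 = \ker dp_1$ and the criterion of Lemma \ref{generic2} collapses to the automatic inclusion $T_{z_0}C_\gamma \subset \ker dp_1(z_0)$, bypassing your decomposition of $T_{z_0}C_\gamma$ into Hamiltonian and vertical pieces.
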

\begin{proof} First, the real principal type assumption combined with the implicit function theorem imply that $C_{\gamma} = \pi^{-1}(\gamma) \cap p_1^{-1}(E_1)$ is a smooth two-dimensional submanifold of $T^{*}M$. We split the analysis into two cases.

\noindent{\bf Case 1:} When $\tilde{\gamma}$ is a two-dimensional Lagrangian torus, we have that locally
$$\tilde{\gamma} = p_{1}^{-1}(E_1) \cap p_{2}^{-1}(E)$$
for some $E \in {\mathbb R}$. Since by assumption $\tilde{\gamma} \subset C_{\gamma}$, and both are two-manifolds, clearly
$$C_{\gamma} = \tilde{\gamma}.$$
Then, $C_{\gamma}$ is non-generic since $\Psi = p_{2}|_{C_\gamma} = E$ and so, all points $z \in C_\gamma$ are critical for $\Psi$.

\noindent{\bf Case 2:} Here we assume that $\tilde{\gamma}$ is a singular joint orbit of dimension one (see subsection \ref{singleaves} below). Then, for all $z \in \tilde{\gamma},$
$$ dp_{2}(z) = \lambda(z) \cdot dp_{1}(z),$$
for some $\lambda(z) \neq 0$. So, from Lemma \ref{generic2},  $z_{0} \in \tilde{\gamma}$ is a critical point of $\Psi: C_{\gamma} \rightarrow {\mathbb R}$ if and only if
$$ T_{z_0} C_{\gamma} \subset \ker (dp_1)(z_0).$$
This  inclusion is always satisfied since $C_{\gamma} \subset p_{1}^{-1}(E_1)$. As a result, {\em all} points $z \in \tilde{\gamma}$ along the one-dimensial orbit are critical for $
\Psi$ and so the latter is not Morse. \end{proof}
\subsection{Singular leaves of the Lagrangian foliation.} \label{singleaves}
  Before taking up the proof of Theorem  \ref{mainthm2} we collect here some basic facts about the geometry of integrable systems and their singular sets. We refer the reader to \cite{TZ3,VN2} for further details.
  
    Given the moment map ${\mathcal P}= (p_{1}, p_{2})$, the {\em singular variety} of the corresponding integrable system is defined to be the set
   $$\Sigma_{sing} = \{ (x,\xi) \in T^{*}M; \, dp_{1} \wedge dp_{2}(x,\xi) = 0 \}.$$
% It is also useful to define the associated compactified singular set:
 %$$ \Sigma = \{ \omega \in S^{*}M; dp_{1} \wedge dp_{2}(\omega) = 0 \}.$$
 We now recall some elementary results about $\Sigma_{sing}$ which we will need later on. 
  First, given the joint flow $\Phi^{t}: T^{*}M \rightarrow T^{*}M$ defined by
 $\Phi^{t}(x,\xi) = \exp t_{1} H_{p_1} \circ \exp t_{2} H_{p_{2}}(x,\xi); \,\,\,\, t=(t_1,t_2) \in {\mathbb R}^{2},$ we observe that 
  \begin{equation} \label{invariance}
  \Phi^{t}(\Sigma_{sing}) = \Sigma_{sing}.
  \end{equation}
 which follows immediately from the fact that $\{ p_1,p_2 \} = 0$ and $\Phi^{t}$ is a diffeomorphism.
% \begin{lemma} \label{singset}\begin{proof}
%\end{proof}
%We note that the compactified regular set $S^{*}M- \Sigma,$ can be written as
%$$ S^{*}M - \Sigma =  \Omega_{1} \cup \Omega_{2};$$
%where, $\pi|_{\Omega_{j}}: \Omega_{j} \rightarrow M$ has rank $j=1,2.$
The singular set $\Sigma_{sing}$ consists of a union of orbits of the joint flow $\Phi^{t}; t \in {\mathbb R}^{2}$. 

\begin{defn} \label{singorbit}
Following \cite{TZ3}, we say that an orbit $\vartheta$ of the joint flow $\Phi^{t}$   {\em singular} if it is not Lagrangian; that is, if $\dim \vartheta \leq 1$.
\end{defn}
\subsubsection{Eliasson nondegeneracy}
For our second main result (Theorem \ref{mainthm2}), we will need to make a non-degenerate assumption on the integrable system with moment map ${\mathcal P} = (p_1,p_2)$. We now give a brief description of this condition. For more detailed treatment,  see  \cite{VN1, VN2, TZ3}.
Let ${\bf p} = {\mathbb R} \{  p_{1}, p_{2} \} \subset  C^{\infty}(T^{*}M-0), \{. \}$ be the standard abelian subalgebra with Poisson bracket. Then,
 given a singular orbit $\vartheta(v) = \exp t_{1} H_{p_1} \circ \exp t_{2} H_{p_2}(v)$ through a point $v \in {\mathcal P}^{-1}({\mathcal B}_{sing})$ of rank $k \leq 1$, we note that the Hessians $d^{2}_{v}p_{j}; j=1,2,$ determine an Abelian subalgebra
$$ d^{2}_{v} {\bf p} \subset S^{2}(K/L, \omega_v)^{*}$$
of quadratic forms on the reduced symplectic subspace $K/L$, where we put
$$ K = \ker dp_{1}(v) \cap \ker dp_{2}(v), \,\,\,\,\, L =  span ( H_{p_1}(v), H_{p_{2}}(v) ).$$
\begin{defn} \label{Eliasson}
We say that the orbit $\vartheta(v)$ is Eliasson non-degenerate of rank $k \leq 1$ if $d^{2}_{v} {\bf p}$ is a Cartan subalgebra of $S^{2}(K/L, \omega_{v})^{*}$.
\end{defn}

\begin{lem} \label{sing}
Assume that the integrable system with moment map ${\mathcal P}= (p_1,p_2)$ is Eliasson non-degenerate. Then, $\Sigma_{sing}$ is a finite union of  orbits of the joint flow, $\Phi^{t}$ with dimension $\leq 1$. The latter are diffeomorphic to open intervals, circles and  isolated points. 
\end{lem}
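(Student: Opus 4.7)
The plan is to prove the three assertions in turn: (a) every orbit contained in $\Sigma_{sing}$ has dimension at most one, (b) these orbits are locally finite (hence finite on bounded energy regions, using compactness of $M$), and (c) each connected orbit is topologically an open interval, a circle or a point. The first and third parts are soft; the real content is (b), which is where the Eliasson non-degeneracy hypothesis is used.

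For step (a), I would observe that for a joint flow $\Phi^{t} = \exp t_1 H_{p_1} \circ \exp t_2 H_{p_2}$, the dimension of the orbit $\vartheta(v)$ equals the rank of $\mathrm{span}\{ H_{p_1}(v), H_{p_2}(v) \}$, and because the symplectic form is nondegenerate this coincides with the rank of $d\mathcal{P}(v)$. By the very definition of $\Sigma_{sing}$ as the locus where $dp_1 \wedge dp_2 = 0$, this rank is $\leq 1$ on $\Sigma_{sing}$, so $\dim \vartheta(v) \leq 1$. This matches Definition \ref{singorbit}.

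For step (b), I would invoke the Eliasson normal form theorem. Near any point $v \in \Sigma_{sing}$ whose orbit $\vartheta(v)$ satisfies the Cartan subalgebra condition of Definition \ref{Eliasson}, there exists a symplectic chart $(x_1, x_2, \xi_1, \xi_2)$ centered at $v$ in which $\mathbf{p}$ is generated, modulo flat errors along $\vartheta(v)$, by the $\mathrm{span}$ of standard quadratic model Hamiltonians, each of elliptic type $\tfrac12(x_i^2+\xi_i^2)$, hyperbolic type $x_i\xi_i$, focus-focus type $(x_1\xi_2 - x_2\xi_1,\, x_1\xi_1+x_2\xi_2)$, or a regular action $\xi_i$. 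In each of these local models, the locus $\{dp_1 \wedge dp_2 = 0\}$ is a union of finitely many coordinate subspaces of dimension $\leq 1$ (the axis of an elliptic block, the coordinate cross of a hyperbolic block, the central point of a focus-focus block). Consequently, in a neighbourhood of $\vartheta(v)$, the singular set $\Sigma_{sing}$ is a finite union of $\Phi^t$-orbits. Combined with the compactness of the relevant energy shells $\mathcal{P}^{-1}(K)$ for $K \subset \mathbb{R}^2$ compact (which follows from the joint ellipticity of $P_1^2+P_2^2$ in the classical sense together with compactness of $M$), a standard covering argument gives finiteness of the number of singular orbits in any bounded energy range.

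For step (c), each singular orbit is an immersed connected manifold of dimension $0$ or $1$. Zero-dimensional orbits are isolated joint fixed points. A connected $1$-dimensional manifold is diffeomorphic to either $\mathbb{R}$ or $S^{1}$ by the classification of connected 1-manifolds, which accounts for the open interval/circle alternative in the statement. The main obstacle will be carrying out step (b) cleanly: one must carefully extract, block by block in the Eliasson normal form, the explicit description of $\{dp_1 \wedge dp_2 = 0\}$ as a union of $\Phi^t$-invariant coordinate submanifolds, and verify that these correspond to finitely many orbits modulo the flat remainder along $\vartheta(v)$. Everything else is essentially formal.
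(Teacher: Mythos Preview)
Your proposal is correct and follows essentially the same line as the paper's proof, which is much terser: the paper simply notes that $\Phi^t$-invariance makes $\Sigma_{sing}$ a union of joint orbits, then asserts that Eliasson non-degeneracy forces $\dim \Sigma_{sing}\le 1$, and concludes with the classification of connected $1$-manifolds. Your steps (a)--(c) unpack exactly this, with your normal-form analysis in (b) making explicit the mechanism behind the paper's one-line dimension claim; the only cosmetic difference is that you phrase the use of non-degeneracy as local finiteness of orbits, whereas the paper phrases it as a global dimension bound on $\Sigma_{sing}$, but these are the same statement once read through the Eliasson local models.
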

\begin{proof} From (\ref{invariance}) it follows that $\Sigma_{sing}$ is a finite union of joint orbits of the joint flow $\Phi^{t}$
% Since $dp_1 \wedge dp_2 \neq 0$ a.e. and $\Sigma_{sing}$ is contained in a Lagrangian $ {\mathcal P}^{-1}(b_{sing})$ for some $b_{sing} \in B_{sing}$, it follows that $\Sigma_{sing}$ must be an orbit of the joint flow, $\Phi^{t}$ 
and so has dimension $\leq 2$. The Eliasson non-degeneracy condition (\ref{Eliasson}) implies that  $\dim \Sigma_{sing} \leq 1$.  As a result, $\Sigma_{sing}$ consists of a union of open intervals, circles and, in the inhomogeneous case, possibly a finite number of isolated points. 
\end{proof}
\begin{remark} In the homogeneous case where $p_{1}(x,\xi)= |\xi|^{2}_{g}$, the singular orbits are necessarily topological intervals or circles since ${\mathcal P} =(p_1,p_2)$ has no isolated critical points.
\end{remark}

The Eliasson non-degeneracy assumption implies that $\Sigma$ is a finite union of isolated singular orbits for $p_j; j=1,2$ and we  use this in the next section  to analyze  the  integral (\ref{mainintegral3})  by microlocalizing near these orbits and applying a classical Birkhoff normal form construction to analyze the resulting integral.

We note  that the crucial difference between the generic case and the case of a bicharacteristic which lifts to a singular joint orbit lies in the fact that due to the invariance of the integral $p_{2}$,
 the computation of $I(s;\hbar)$ can be reduced to a {\em single} fibre $ \pi^{-1}(x) \cap p_{1}^{-1}(E_1)$ in the latter case. Thus, there is no additional cancellation coming from the computation of the $s$-integral and this is ultimately the reason why the ${\mathcal O}(\hbar^{-1/2}) \, L^{2}$-restriction  bound is saturated by these singular orbits.

 %Before making Taylor expansions, we need to determine the critical points 
 %of the phase function
 %\begin{equation} \label{redphase}
 %\Psi (\omega,\tau) := p_2 (x(0), \eta(\omega;\tau)).
 %\end{equation}
 \subsection{Microlocalization along $C_{\gamma}$}
 In the following, it is useful to split up the mapping cylinder $C_{\gamma}$ as follows:
\begin{equation} \label{split}
C_{\gamma} = C_{\gamma}^{reg} \cup C_{\gamma}^{sing},
\end{equation}
where $C^{reg}_{\gamma}$ (resp. $C^{sing}_{\gamma}$) denote invariant open neighbourhoods of regular (resp. singular) points of $C_\gamma$.  Let $\chi_{reg}(\omega)$ (resp. $\chi_{sing}(\omega)$) be a partition of unity subordinate to this covering of the cylinder, $C_{\gamma}$.
%By invariance of $p_j; j=1,2$ under bicharacteristic flow it also follows that:
%\begin{equation} \label{invariance}
%G_{\tau}(\Omega_{sing})(p) = \Omega_{sing}(G_{\tau}p) \,\,\, \mbox{and} \,\,\, G_{\tau}(\Omega_{reg})(p) = \Omega_{reg}(G_{\tau}p).
%\end{equation}
We then write
\begin{eqnarray} \label{split2}
I_{\gamma}(s;\hbar) =   \int  \int e^{-i s \Psi(\tau,\omega)  /\hbar} c(\omega,\tau, s;\hbar)  \chi_{reg}(\omega) \, d\omega d\tau \nonumber \\
+  \int  \int e^{-i s \Psi(\tau,\omega)  /\hbar} c(\omega,\tau,s;\hbar) \chi_{sing}(\omega) \, d\omega d\tau =: I_{reg}(s;\hbar) + I_{sing}(s;\hbar).
\end{eqnarray}
First, we analyze the regular term on the RHS of (\ref{split2}).
\subsection{Analysis of the regular term.}
 Given $\omega \in $ supp $\chi_{reg}$, 
 %it follows that  since $dp_{1} \wedge dp_{2} (\omega) \neq 0$ and $dp_{1} =0$ on $p^{-1}_{1}(E_{0})$, 
 in light of the invariance formula (\ref{invariance}) it easily follows that for all  $\tau \in [a,b],$ and $\omega \in $ supp $\chi_{reg},$
  $$d\Psi (\tau,\omega) = d(\iota^{*}p_{2})(\tau,\omega) \neq 0.$$
Indeed, rank $(dp_{1},dp_{2})(\tau,\omega) =2$ for all $\omega \in $ supp $\chi_{reg}$ and so, by Lagrange multipliers, the restriction $\Psi = \iota^{*}p_{2} \in C^{\infty}(p_1^{-1}(E_0)) $ satisfies $d\Psi(\tau,\omega) = d(\iota^{*}p_{2})(\tau,\omega) \neq 0$ for all $(\tau,\omega) \in [a,b] \times $ supp $\chi_{reg}$. 
  But then one can introduce $\iota^{*}p_{2}$ as a new coordinate on supp  $\chi_{reg}$ and  so by the change of variables formula,
 \begin{equation} \label{generic4}
(2\pi \hbar)^{-1} \int  \int e^{i [ sE -s\theta]/\hbar} c(s,\theta;\hbar) d\theta ds = {\mathcal O}(1).
\end{equation}
The last bound on the RHS of (\ref{generic4}) follows by stationary phase in $(s,\theta)$ and the estimate  is uniform for $ E \in \pi_{2} ( {\mathcal P}(T^{*}M) )$ where $\pi_{2}:(E_{1},E) \mapsto E$.

  %Similarily,
 %\begin{equation} \label{crit2}
 %\partial_{\omega} \Psi (\tau, \omega) =  (\partial_{\xi} p_{2}) \cdot (\partial_{\omega} \eta) = (\partial_{\xi} p_{2}) \cdot d\pi_{2}  [ \, \partial_{\omega}  G_{-\tau} (x(\tau),\omega) \, ] \nonumber \\
 %\end{equation}
% We note that 
 %$$ \partial_{\omega} = $$

 %Moreover, since
 %$$ \frac{\partial}{\partial \omega} = f \cdot H_{p_{1}}$$
  %for some $f \neq 0$, it is immediate that
  %\begin{equation} \label{crit2}
  %\partial_{\omega} \, p_{2}(x_{0}, \pi_{2} G_{-\tau}(x(\tau), \omega) ) = 0.
  %\end{equation}
  %So, in view of (\ref{crit2}), it suffices to analyze the critical points of $\Psi(\tau,\omega)$ in the $\tau$-variable alone, since it doesn't depend on $\omega \in S_{x_{0}}^{*}M$. 
 % TO HERE>>>>>

%\begin{equation} \label{generic}
%dp_{1} \wedge dp_{2}(x_{0},\omega) \neq 0.
%\end{equation}
%But then it is easy to see that
%\begin{equation} \label{generic2}
%\partial_{\omega} p_{2}(x_0, \omega) \gg 1.
%\end{equation}
%Indeed let $t\in TS_{x_{0}}^{*}M$ and $\nu \in NS_{x_{0}}^{*}M.$ Then using (\ref{generic}) we get that
%$$dp_{1} \wedge dp_{2}( t,\nu) = dp_{2}(t) \neq 0.$$
%Given (\ref{generic2}) one can make the change of variables
%$$ \omega \mapsto p_{2}(x_{0},\omega)$$
%in (\ref{e1}) and get that with another $\hbar$-elliptic amplitude $c''(\theta,s;\hbar)$,
%\begin{equation} \label{generic3}
%I(s;\hbar) :=   \int e^{-i s \theta  /\hbar} c''(\theta,s;\hbar)  d\theta.
%\end{equation}
%and so, by an application of stationary phase in $(s,\theta)$-variables,

\subsection{Analysis of the singular term.}
Here we assume that for $\{(x(\tau), \omega); a \leq \tau \leq b \}  \in supp \chi_{sing}$. So, in particular $(x(\tau),\omega)$ is contained in an arbitrarily small neighbourhood of $(x(0),\omega(0)) \in \tilde{\gamma}$ where,
$$ dp_{1} \wedge dp_{2}(x(0),\omega(0)) =0.$$

To deal with the second term in (\ref{split2}), it is useful to pass to a convergent singular Birkhoff normal form  and write the phase function $\Psi(\tau, \omega)$ in (\ref{split2})  in normal coordinates. The analysis will be split into several cases depending on the nature of the singularity in the phase function, $\Psi$.

\subsubsection{Singular Birkhoff normal forms.}
First, we recall  that
the orbit $\vartheta= \cup_{(t_1,t_2) \in {\mathbb R}^{2}} \exp t_1 H_{p_{1}} \circ  \exp t_{2} H_{p_2} (x(0),\omega(0))$ of the joint flow is of dimension $\leq 1$. So, it is diffeomorphic to a union of intervals and circles and possiblty a finite number of (necessarily isolated) critical points. Since the latter case of fixed points is handled very similarily to the case of 1-D orbits, but we only consider here restriction bounds along curves 
%give a detailed analysis in the latter case and then briefly discuss the case of fixed points at the end.

%\begin{proof}
%By invariance of the integrals $p_{j};j=1,2$ we have that for the joint flow $\Phi_{t}: T^{*}M \rightarrow T^{*}M; \,\,\, t = (t_1,t_2) \in {\mathbb R}^{2},$
%$$ \Phi_{t}^{*} (dp_1 \wedge dp_2) = dp_1 \wedge dp_2.$$
%This implies that $H_{p_1} (q)= \lambda(q) H_{p_{2}}(q)$ for $q \in \Phi_{t} (x(0),\omega(0))$ where $ \lambda \in C^{\infty}(\Phi_{t}(x(0),\omega(0))$.
%\end{proof}

%To deal with this case, we use classical convergent Birkhoff normal forms in the vicinity of non-degenerate geodesics. 

The literature on general classical (and quantum) BIrkhoff normal forms  is extensive \cite{G1,G2,ISZ,Z1,Z2} and we focus here on the integrable case where the canonical change of variables to normal form is actually convergent \cite{CP,HS,T2,TZ3, MVN,VN2}.  Without loss of generality, we assume here that the singular locus $\tilde{\gamma} = \Phi_{t}(x(0),\omega(0))$ consists of a bicharacteristic. Whether or  not  $\gamma$  is the projection of a periodic bicharacteristic is of no consequence here. Since we are considering the case where $n=2$, there are only two possibilities: $\gamma$ is either {\em stable} (elliptic) or {\em unstable} (hyperbolic).

\subsubsection{Stable case.} \label{stablecase}
 Let $\gamma$ be a non-degenerate, {\em stable} bicharacteristic in the singular locus $ {\mathcal P}^{-1}(b) \subset {\mathcal P}^{-1}({\mathcal B}_{sing})$.  Let  $(x, t) : M \rightarrow {\mathbb R}^{2}$ be coordinates centered at the point $x_{0} \in M$. For instance one can take $(x, t)$ to be Fermi coordinates along $\gamma$. In this case, the $t$-coordinate runs along the geodesic and the $x$-coordinate is transversal.  By possibly  replacing  $p_{1}$ and $p_2$ by appropriate functions $f_{j}(p_{1},p_{2}); j=1,2$ (the corresponding operators $f_{j}(P_{1}(\hbar), P_{2}(\hbar)); j=1,2$ have the same joint eigenfunctions), one can assume that
 $$p_{j}(x,t,\xi,\sigma) = b_{j} + \delta_{j}(\sigma) + \omega_{j}(\sigma)(x^{2} + \xi^{2}) + {\mathcal O}_{t,\sigma}( |x,\xi|^{3}); \,,\,\, j=1,2,$$ 
where, $\delta_j(0) = 0, \, \omega_{j}(0) \neq 0; \, j=1,2$ and $\omega_j, \delta_j$ are locally-defined smooth functions near $\sigma =0$.

 In this case, \cite{TZ3, VN1, VN2} there exists a canonical map from a small neighbourhood $U_{\gamma}$ of $C_{\gamma}$
 $$ \kappa: U_{\gamma} \longrightarrow T^{*}\gamma \times  B_\delta(0),$$
 $$ \kappa : (x,t;\xi,\sigma) \mapsto (x',t';\xi',\sigma'),$$
 with \begin{equation} \label{normalform1}   
 \kappa^{*}p_{j}(x',t';\xi',\sigma') = F_{j}(x'^{2} +\xi'^{2}, \sigma'); \,\,\,\,j=1,2.
 \end{equation}
 Here, $\delta >0$ is a sufficiently small tube radius and $F_{j} \in C^{\infty}( B_{\delta}(0) \times B_{\delta}(0))$. By possibly replacing the classical integrals $p_{j}; j=1,2$ by $f_{k}(p_{1},p_{2}); k=1,2$ with appropriate $f_{k} \in C^{\infty},$ without loss of generality, we can assume that 
 $$ F_{j} (u,v) =  b_{j} + \beta_{j}(v) + \alpha_j(v) u + {\mathcal O}_{v}(u^2 ).$$
 Moreover, one can take here $\beta_{j}(v)  = v + {\mathcal O}(v^{2})$.
 The Eliasson non-degeneracy condition says that for all $v \in B_\delta(0), \,\,\alpha_{1} \neq \alpha_{2}$ with $\min \{ |\alpha_{1}|, |\alpha_{2}| \} \geq \frac{1}{C} >0.$
 We need to compute the asymptotics of the RHS of (\ref{split2}). Without loss of generality, one can assume that $x_{0} \in M$ is an interior point of the segment $\gamma$ and so $a<0, b>0$.
 \begin{eqnarray} \label{normal form2}
  I_{sing}(s;\hbar) = \int \int_{ C_{\gamma} } e^{-is \Psi(t,\omega) / \hbar} \chi_{sing}(\omega) c'(\omega,t;s) \, d\omega dt.
  \end{eqnarray}
  To make the change of variables in (\ref{normal form2}) to Birkhoff coordinates $(x',t';\xi',\sigma') \in T^{*}(\gamma) \times B_{\delta}(0)$, we use that
  $$ x'(x,t;\xi,\sigma) = x + {\mathcal O}(x^{2}), \,\,\, t'(x,t;\xi,\sigma) = t + {\mathcal O}(x),$$
 % $$ \left| \frac{\partial (x,t)}{\partial (x',t')} \right|_{x=t=0} \, \geq \frac{1}{C_{0}} >0$$
  and so, from the expansion in (\ref{normalform1}), one gets that
  \begin{equation} \label{normalform3}
  \kappa (p_{1}^{-1}(E_1) \cap \pi^{-1}(\gamma)) =  \{ (x',t';\xi',\sigma'); \, x'=0, \,\,  \sigma' + \alpha_1(\sigma')   \xi'^2 + {\mathcal O}_{\sigma}(\xi'^{4}) + {\mathcal O}(\sigma'^2) = 0 \}.
  \end{equation}
To simplify the writing, from now on we drop the primes in the Birkhoff coordinates and put $(b_1,b_2) = (E_1,E)$.  Then, since
$$ \frac{\partial}{\partial \sigma} \left( \sigma + \alpha_1(\sigma)   \xi^2 + + {\mathcal O}(\sigma^{2}) +  {\mathcal O}_{\sigma}(\xi^{4})  \right) = 1 + {\mathcal O}(\sigma) + {\mathcal O}( \xi^{2}) $$
it follows from the implicit function theorem that one can use   $(\sigma,t) \in B_{\delta}(0) \times \gamma$ as  local parametrizing coordinates on $\kappa (p_{1}^{-1}(E_1) \cap \pi^{-1}(\gamma)) = \kappa (C_{\gamma})$. Substitution of the defining equation in (\ref{normalform3}) into the formula for $\kappa^{*} p_{2}$ gives
  $$\Psi(\sigma) = E + \beta_{2}(\sigma) + \alpha_{2}(\sigma)  \xi^{2}(\sigma) + {\mathcal O}(\xi^{4}(\sigma))$$
  $$=  E + \sigma -\sigma  \frac{\alpha_{2}(\sigma)}{\alpha_{1}(\sigma)}   + {\mathcal O}(\sigma^{2}) $$
  \begin{equation}\label{phase}
  = E + \left( 1 -  \frac{\alpha_{2}(\sigma)}{\alpha_{1}(\sigma)}  \right)  \sigma + {\mathcal O}(\sigma^{2}).
  \end{equation}
 Here we have used that $\beta_{2}(\sigma) = \sigma + {\mathcal O}(\sigma^{2})$ with $\beta_{2} \neq 0$.
 
 Next we compute the induced measure $d\omega$ in terms of the Birkhoff coordinates. 
  Let $\Omega$ denotes the canonical 2-form locally given by
  $dx\wedge d\xi + dt \wedge d\sigma$. Since $\kappa$ is canonical,  locally the Lebesgue measure
 $$  (\kappa^*\Omega)^{2} = \Omega^{2} = dx dt d\xi d\sigma.$$
The induced arc-length (ie. Liouville measure) $d\omega$ satisfies 
\begin{equation} \label{arclength}
 \kappa_{*}d\omega dt=  i_{*}   d\sigma dt.
\end{equation}
In (\ref{arclength}),  $i: \gamma \times  \mbox{supp} \chi_{1}  \rightarrow \kappa (C_{\gamma})$ is the local parametrization given by
$$i(t,\sigma) = (0,\xi(\sigma), t,\sigma),$$
where $\xi(\sigma)$ satisfies the identity in (\ref{normalform3}).

%$$=  \delta (\sigma + \alpha_1(\sigma) ( \xi^{2}) + + {\mathcal O}(\sigma^{2}) + {\mathcal O}( \xi^{4}) ) \cdot   dt d\xi d\sigma |$$
%in view of (\ref{normalform1}). 
% So,
%$$ d\omega dt =  
% \delta (x) \cdot \delta (\sigma + \alpha_1(\sigma) (x^{2} + \xi^{2}) + {\mathcal O}(|x^{2} + \xi^{2}|^{2}) ) \cdot  |dx dt d\xi d\sigma |$$
% \delta \, (\sigma + \alpha_1(\sigma)  \xi^{2} + {\mathcal O}( \xi^{4}) ) \cdot  |d\xi d\sigma dt | .$$
Choosing $(\sigma,t)$ as   coordinates on supp $\chi_{1} \times \gamma$, by a straightforward computation we get that 
\begin{equation} \label{measure}
\kappa_{*} d\omega dt =   f(\sigma) |\sigma|^{-1/2}  \, d\sigma dt,
\end{equation}
where $ f \in C^{\infty}$ with  $f(\sigma) \geq \frac{1}{C} >0.$ 
 Consequently, by a change of variables, in terms of the normal coordinates we get that
  \begin{equation} \label{upshot1}
  (2\pi \hbar)^{-1} \int e^{isb_{2}/\hbar} I_{sing}(s;\hbar) ds = (2\pi \hbar)^{-1} \int  \int \int  \exp is  \left( \left( 1 -  \frac{\alpha_{2}(\sigma)}{\alpha_{1}(\sigma)}  \right)  \sigma + {\mathcal O}(\sigma^{2})  \right)
 \end{equation} 
$$ \times  c(s,\sigma,t;\hbar)\chi_{sing}(\sigma) |\sigma|^{-1/2} d \sigma dt  ds.$$
where $c \in S^{0}(1) \cap C^{\infty}_{0}$ is $\hbar$-elliptic on supp $\chi_{sing}$. Now, from the non-degeneracy of the integrable system, we use the fact that $\alpha_{2}(\sigma) \neq \alpha_{1}(\sigma)$ to make the change of variables
$$\sigma \mapsto \left( 1 -  \frac{\alpha_{2}(\sigma)}{\alpha_{1}(\sigma)}  \right)  \sigma + {\mathcal O}(\sigma^{2}) $$
in the phase in (\ref{upshot1}) and integate out the $t$-variable (note that the phase in (\ref{upshot1}) is independent of $t$). The result is that
\begin{equation} \label{homogeneous}
  (2\pi \hbar)^{-1} \int e^{isb_{2}/\hbar} I_{sing}(s;\hbar) ds =(2\pi \hbar)^{-1} \int  \int    e^{is \sigma/\hbar}  \tilde{c}(s,\sigma;\hbar)  |\sigma|^{-1/2} d \sigma   ds.
  \end{equation}
  Here the amplitude $\tilde{c}$ has the same properties as $c$.
 %Finally, integrating out the $t$-variable gives
%\begin{equation} \label{upshot2}
%(2\pi \hbar)^{-1} \int  \int  e^{is (1-\sigma)/\hbar}  c''(s,\sigma;\hbar)  (1- \sigma^{2})^{-1/2} d \sigma  ds
%= (2\pi \hbar)^{-1} \int  \int  e^{is \eta /\hbar}  c''(s,1-\eta;\hbar)  |\eta|^{-1/2} d \eta  ds.
%\end{equation}  

Making a first-order Taylor expansion around $\sigma = 0$ we write $ \tilde{c}(s,\sigma;\hbar) = \tilde{c}(s,0;\hbar) + \sigma  \cdot \delta \tilde{c} (s,\sigma;\hbar)$ where $\delta \tilde{c}(s,\sigma;\hbar) \in S^{0}(1)$ with compact support in the $s$-variable and both $\tilde{c}$ and $\delta$ have standard symbolic expansions in $\hbar$ with $\delta \tilde{c}(s,\sigma;\hbar) = \delta \tilde{c} (s,\sigma) + {\mathcal O}(\hbar)$ and $\tilde{c}(s,0;\hbar) = \tilde{c}(s,0) + {\mathcal O}(\hbar)$.  Assume that supp $\chi_{sing}  \subset \{ \sigma; |\sigma| \leq C \}.$ The integral in (\ref{upshot1})  splits into the sum
\begin{equation} \label{upshot3}
(2\pi \hbar)^{-1} \int  \int  e^{is \sigma/\hbar}  \tilde{c}(s,0)  |\sigma|^{-1/2} {\bf 1}_{|\sigma| \leq C} (\sigma)  d \sigma ds 
\end{equation}
$$+ (2\pi \hbar)^{-1} \int  \int  e^{is \sigma/\hbar}  \delta \tilde{c} (s,\sigma)  |\sigma|^{1/2}{\bf 1}_{|\sigma| \leq C} (\sigma)  d \sigma  ds + {\mathcal O}(1)=: I_{sing}^{(1)}(\hbar) + I_{sing}^{(2)}(\hbar) + {\mathcal O}(1).$$
Let ${\mathcal F \phi}(\xi) = (2\pi)^{-n} \int_{{\mathbb R}^{n}} e^{-ix\xi} \phi(x) dx$ be the usual Fourier transform.  Then, by Fubini,
$$I_{sing}^{(2)}(\hbar) = (2\pi \hbar)^{-1} \int_{|\sigma| \leq C} |\sigma|^{1/2} ({\mathcal F \delta \tilde{c}})_{s\rightarrow \sigma}(\hbar^{-1} \sigma, \sigma) d\sigma = {\mathcal O}(\hbar^{1/2}). $$
On the other hand, again by Fubini, for the leading term 
$$I_{sing}^{(1)}(\hbar) = (2\pi \hbar)^{-1} \int_{|\sigma| \leq C}  \left( \int  e^{is \sigma /\hbar}  \tilde{c}(s,1)  ds \right)  |\sigma|^{-1/2} d \sigma $$
\begin{equation} \label{upshot4}
  = (2\pi \hbar)^{-1} \int  ({\mathcal F \tilde{c}})_{s \rightarrow \sigma}(\hbar^{-1} \sigma,1)  \,   |\sigma|^{-1/2} d \sigma \sim_{\hbar \rightarrow 0} c_{\gamma} \hbar^{-1/2}.
  \end{equation} Again, the constant $c_{\gamma}>0$ appearing on the RHS in (\ref{upshot4}) is uniform in $E$ with $(E_1,E)\in {\mathcal P}(T^{*}M).$ Consequently,
  $$ (2\pi \hbar)^{-1}\int e^{-is E /\hbar} I(s;\hbar) ds = I_{sing}^{(1)}(\hbar) + I_{sing}^{(2)}(\hbar) + I_{reg}(\hbar) $$
  $$= c_{\gamma} \hbar^{-1/2} + {\mathcal O}(\hbar^{1/2}) + {\mathcal O}(1) = c_{\gamma} \hbar^{-1/2} + {\mathcal O}(1).$$
  This completes the proof of Theorem \ref{mainthm2}. \qed
  
%Consequently, given $(0,0; \xi', \tau') \in S_{x_0}^{*}M$, it follows easily that in the normal coordinates:
% $$ G_{-t}(0,0;\xi',\tau') = (x'(-t), -t; \xi'(-t), \tau'),$$
%  where solving the Hamilton equations for the harmonic oscillator shows that:
 % $$ (x'(-t),\xi'(-t)) = ( \cos t \, x'(0)  - \sin t \, \xi'(0), \sin t \, x'(0) + \cos t \, \xi'(0)) = ( - \sin t \, \xi'(0), \cos t \, \xi'(0) ),$$
%  since $x'(0) = 0.$  Finally, using (\ref{normalform3}) gives $\xi'(0)^{2} = \alpha_2^{-1} \tau' (1-\tau'),$ and so, making the Birkhoff change of variables in (\ref{normal form2} yields the integral
 % \begin{equation} \label{nfphase}
%  \int \int e^{i s [ \,  E \tau' + (1-\tau') \cos^{2} t \, ]/\hbar } \, \tilde{c} (\tau',t;s;\hbar)  \, \chi'_(1-ta% u') \, \chi(t) \, dt  \, d\tau' \sim_{\hbar \rightarrow 0} \frac{\hbar^{1/2}}{s^{1/2}} e^{i  [ E \tau' + (1-\tau')%  ]/\hbar},
%  \end{equation}
%  where, the last line in (\ref{nfphase}) follows by a stationary phase expansion  in $t$ at the nondegenerate, cri% tical point $t=0$. It is of course at this  Since $a(\theta) >0$ the second equation implies that $\xi_{t} =0$ at criical points. The first equation implies
 % $\hbar^{1/2}$. So, finally,
%  \begin{equation} \label{final}
 % (2\pi\hbar)^{-1}  \int e^{-iEs/\hbar}  I(s;\hbar) \, ds \sim_{\hbar \rightarrow 0} (2\pi \hbar)^{-1/2} \int  \int %e^{E(1-\tau')s/\hbar} \, c(s,\tau';\hbar) s^{-1/2} \, ds \, d\tau' \sim_{\hbar \rightarrow 0} c \hbar^{-1/2}.
 % \end{equation}
 % $\clubsuit$ localize in E to get uniformity in above...$\clubsuit$
 
\subsubsection{Unstable case.} 
 In this case, the relevant canonical transformation to normal form is given by 
 $ \kappa : U_{\gamma} \rightarrow T^{*} \gamma \times B_{1}(0),$
 where,
 $$\kappa^{-1*} p_{j} (x,t;\xi,\sigma) = F(\xi^{2} - x^{2}, \sigma) = b_{j} + \beta_{j}(\sigma)+ \alpha_{j}(\sigma) (\xi^{2}-x^{2}) + {\mathcal O}(|\xi^{2}-x^{2}|^{2}); \,\,\,\, j=1,2.$$
 The computations follow in the same way as in the stable case by putting $x=0$ and repeating essentially verbatim the analysis in \ref{stablecase}. 

%\subsubsection {Case of critical points.}

%In the inhomogeneous case it could happen that the singular orbit $\tilde{\gamma} = \Phi^{t}(x(0),\omega(0))$ through $(x(0),\omega(0)) \in T^{*}M$ consists of a single point. Examples include the Neumann oscillator at the (lifts) of the  critical points of the potential $V(x) = \alpha_{1} x_{1}^{2} + \alpha_{2} x_{2}^{2} + \alpha_{3} x_{3}^{2} $ constrained to the sphere $|x|^{2} =1$ . Here one has both stable-unstable and unstable-unstable critical points \cite{} $\clubsuit$ Add more abour vungoc here $\clubsuit$. Similarly, the lift of the norh pole to the energy  level set $p_{1}^{-1}(1)$ is a complex hyperbolic critical point of the joint flow in the case of the spherical pendulum.
%The analysis is again very similar to that in (3.4.2) above (see  also \cite{TZ3} section 2). 

% except that $\cosh^{2}t$ appears in the phase, but it also has a non-degenerate critical point at $t=0$.
% It consequently follows that  when $x(t)$ is a {\em non-degenerate} geodesic segment such that its lift to $S^{*}_%{\gamma}M \subset {\mathcal P}^{-1}({\mathcal B}_{sing}),$ 

%\clubsuit$ need to put in log's ...$\clubsuit$
 From (\ref{upshot4}) it follows that 
 \begin{equation} \label{singestimate}
   \sum_{j} \rho( \hbar^{-1} [\lambda_{j}^{(1)}(\hbar) - E_{1}]) \,  \rho( \hbar^{-1} [\lambda_{j}^{(2)}(\hbar) - E]) \,\int_{\gamma} |\phi_{j}^{\hbar}(s)|^{2} ds \sim_{\hbar \rightarrow 0}  c_{\gamma}(E;\rho) \hbar^{-1/2}.
 \end{equation}
 So, by taking supremum over $E$ in (\ref{singestimate}) the first estimate in (ii) of Theorem \ref{mainthm2} follows.
The final part of the proof of Theorem \ref{mainthm2} follows from the result of Toth and Zelditch \cite{TZ3} which says that, unless $(M,g)$ is a flat torus, the bicharacteristic flow must have a singular orbit, $\tilde{\gamma}$.  But then, $\tilde{\gamma} \subset {\mathcal P}^{-1}(E_{1},E)$ where $(E_{1},E) \in {\mathcal B}_{sing}$. %Choosing joint eigenfunctions $\phi_{j_k}^{\hbar}$ of $P_j(\hbar); j=1,2$ with $|\lambda_{j}(\hbar) - b | = {\mathcal O}(\hbar)$ in  (\ref{singestimate}) gives for $\hbar \in (0,\hbar_{0}],$
% $$ \int_{\gamma} |\phi_{j_k}^{\hbar}(s)|^{2} ds \geq C \hbar^{-1/2}.$$
 In the case where $\tilde{\gamma}$ is stable, the existence of the subsequence of joint eigenfunctions follows from the usual joint trace formula 
 \begin{equation} \label{jointtrace}
   \sum_{j} \rho( \hbar^{-1} [\lambda_{j}^{(1)}(\hbar) - E_{1}]) \,  \rho( \hbar^{-1} [\lambda_{j}^{(2)}(\hbar) - E])  \sim_{\hbar \rightarrow 0} c(E;\rho).
  \end{equation}
 To see this, one simply argues by contradiction: Assume that for all eigenfunctions
 $$ \int_{\gamma} |\phi^{\hbar}_{j}(s)|^{2} ds = o(\hbar^{-1/2}).$$
 Then we bound the LHS in (\ref{singestimate}) by
 $$ o(\hbar^{-1/2})  \times \sum_{j} \rho( \hbar^{-1} [\lambda_{j}^{(1)}(\hbar) - E_{1}]) \,  \rho( \hbar^{-1} [\lambda_{j}^{(2)}(\hbar) - E]) = o(\hbar^{-1/2})$$
 by the joint trace formula (\ref{jointtrace}). This contradicts the asymptotic $\sim \hbar^{-1/2}$ on the RHS of (\ref{singestimate}).
 
 In the unstable case \cite{BPU, TZ3}, the  formula (\ref{jointtrace}) gets replaced by
 $$  \sum_{j} \rho( \hbar^{-1} [\lambda_{j}^{(1)}(\hbar) - E_{1}]) \,  \rho( \hbar^{-1} [\lambda_{j}^{(2)}(\hbar) - E])  \sim_{\hbar \rightarrow 0} c(E;\rho) | \log \hbar |.$$
 In view of (\ref{singestimate}), this gives a subseqeunce $\phi_{j_k}^{\hbar}; k=1,2,3,...$ satisfying 
 $$ \int_{\gamma} |\phi_{j_k}^{\hbar}(s)|^{2} ds \geq   c_{\gamma} \hbar^{-1/2} |\log \hbar|^{-1}$$
% Consequently, the {\em asymptotic }  $\hbar^{-1/2}$-result is {\em always} attained precisely when $\gamma$ is a (segment of) a non-degenerate, singular joint orbit of the joint Hamilton flow of $H_{p_{1}}$ and $H_{p_2}$ contained in the singular locus of the moment map,
% ${\mathcal P}^{-1}({\mathcal B}_{sing})$. 
for $\hbar \in (0,\hbar_{0}]$.  This completes the proof of Theorem \ref{mainthm2}. \qed

\section{The example of a convex surface of revolution.} \label{examples}

%subsection{ Convex surfaces of revolution.}

One can parametrize convex surfaces of revolution by using geodesic polar coordinates $(t,\phi) \in [0.,1] \times [0,2\pi]$ in terms of which
 $$p_{1}(\phi,t;\xi_{\phi}, \xi_{t}) = \xi_{t}^{2} + a^{-1}(t) \, \xi_{\phi}^{2}, $$
 and
 $$p_{2}(\phi,\theta;\xi_{\phi}, \xi_{\theta}) = \xi^{2}_{\phi},$$
 where,  the profile function satisfies $a(0)=a(1)=0$ and $a(t)$ is a non-negative Morse function with a single non-degenerate maximum at $t = t_{0} \in (0,1)$. The level curve $t= t_{0}$ is the equator of the surface. Let $\gamma = \{ (t, \phi(t)); 0 < a \leq t \leq b < 1\}$ be a curve segment on the surface.
 The computation of the phase function $\Psi$ in this case is easy. Clearly,
 $$ C_{\gamma} = \{ (t, \phi(t); \xi_{t},\xi_{\phi}); \xi_{t}^{2} + a^{-1}(t) \xi_{\phi}^{2} = 1 \}$$
 and  one can use $t \in (0,1)$ and $\xi_{t}$ to parametrize $C_{\gamma}$. The result is that
 $$ \Psi(t, \xi_{t}) = a(t) \, ( 1 - \xi_{t}^{2}); \,\,\,\, t \in [a,b].$$
 The critical points are the solutions of
 $$ \partial_{t} \Psi = a'(t) (1-\xi_{t}^{2}) = 0 \,\,\, \mbox{and} \,\,\,\partial_{\xi_t} \Psi = -2 a(t) \xi_{t} = 0.$$
  Since $t \in (0,1)$, there is a single critical point with $\xi_{t} = 0$ and $a'(t) = 0$.  This happens precisely when $t = t_{0}$.  The end result is that the critical point of $\Psi$  is
 $(t_{0}, 0).$ 
 
 %and $ (0,\pm 1)$ together with $(1,\pm 1)$.
% \subsubsection{Analysis at the poles.}
 %^The projections of the critical points $(0,\pm 1)$ and $(1,\pm1 )$ onto the surface are the north and south poles. Here one has to carry out a separate analysis of nature of the critical points since the polar coordinates degenerate.

We next compute the terms of the Hessian matrix at the critical point $(t_0,0)$.  The result is that  $\partial_{t}^{2} \Psi = a''(t_0),  \, \partial_{t} \partial_{\xi_t}\Psi=0$ and
 $\partial_{\xi_t}^{2} \Psi = - 2 a(t_{0}).$ Consequently, we get that
 $$ \det ( d^{2} \Psi )|_{ (t_{0},0)}  = -2 a(t_0) a''(t_{0}) \neq 0$$ by our Morse assumption on the profile function of the surface.
 Thus,  it follows that the curve segment $\gamma$ is generic. These curves are segments of graphs over the meridian great circle.

Next consider curves of the form 
$$ \gamma = \{ (\theta(t), \phi(t) = t);  \, t \in  [a,b] \}$$
which are graphs over the equator.  In this case, 
$$\Psi(t,\xi_{t}) = a(\theta(t)) (1-\xi_{t}^{2}); \,\,\, t \in [a,b].$$
  The critical points in this case are the solutions of
  $$ \partial_{t} \Psi = a'(\theta(t)) \cdot \theta'(t) (1-\xi_{t}^{2}) = 0 \,\,\, and \,\,\, \partial_{\xi_t} \Psi = -2 a(\theta(t)) \xi_{t} = 0.$$
  Since $a(\theta) >0$ the second equation implies that $\xi_{t} =0$ at critical points. The first equation implies
  $$a'(\theta(t)) =0\,\, \mbox{or} \,\,\, \theta'(t) = 0.$$
 For the Hessian,
  $$ \partial_{t}^2 \Psi = [ a''(\theta(t)) | \theta'(t)|^{2} + a'(\theta(t)) \theta''(t)] (1-\xi_t^{2}) = 0, $$
  and $$ \partial_{\xi_t}^{2} \Psi = -2 a(\theta(t)).$$
  Also, clearly $\partial_{t} \partial_{\xi_t} \Psi = 0$ at any critical point $(t_0,0)$.  
  In the case where $a'(\theta(t_0))=0$ at the crtical point $(t_0,0)$
   one gets
  $$\det  (d^{2} \Psi)|_{(t_0,0)} = -2 a(\theta(t_0)) \,  a''(\theta(t_0))  \, | \theta'(t_0)|^{2}. \,\,\,\, (*)$$
    In the case where $\theta'(t_0) = 0$ at the critical point $(t_0,0)$ one gets
   $$\det  (d^{2} \Psi)|_{(t_0,0)} = -2 a(\theta(t_0)) \,  a'(\theta(t_0))  \,  \theta''(t_0) \,\,\,\, (**).$$ The only way $(*)$ can vanish is if also $\theta'(t_0) =0$, so that both $\theta'(t_0)=0$ and $a'(\theta(t_0))=0$; that is, the curve $\gamma(t)$ is tangent to the equator at $t=t_0$.
    In the second case where $\theta'(t_0)=0$ and $a'(\theta(t_0)) \neq 0$, the curve $\gamma(t)$ is tangent to another circle parallel to the equator.

So, curves which are graphs over the equator are generic in the sense of Definition \ref{generic} provided  $\theta'(t) \neq 0$. This condition is satisfied provided  $\theta :[a,b] \rightarrow (0,\pi)$ is never tangent to a  circle parallel to the equator.  In particular, this rules out the cases where $\gamma$ includes pieces of the equator $ z = 0$  or parallel circles $z = const.$. The equator is of course the (only) projection of a singular orbit. It is non-generic and in that case,  Theorem \ref{mainthm2} applies. The parallel circles $z= const.$ are caustics which are also necessarily non-generic since in the latter case there are joint eigenfunctions which blow-up like $\sim \lambda^{1/6}$ in $\sup$-norm
  along the curve.
  
   To see what Theorem \ref{mainthm} means for a specific sequence of eigenfunctions, we consider the special case of the round sphere where $\phi_{\lambda} (x) = \lambda^{1/4} (x_{1} + i x_{2})^{\lambda}$ are the  highest-weight spherical harmonics where $\int_{M} |\phi_{\lambda}|^{2} d\vol  \sim 1$ and where $\lambda = n; \, n=1,2,3,...$. In the previous paragraph we showed that
   all smooth curves $\gamma = \{ (\theta(t) = t, \phi(t) );  \, t \in [a,b] \}$ are generic. In terms of spherical coordinates, the restricted eigenfunction  
   $$\phi_{\lambda}(t) = \lambda^{1/4} [ \cos t \cos \phi(t) + i \cos t \sin \phi(t) ]^{\lambda}$$
   and so,
   $$\int_{\gamma} |\phi_{\lambda}|^{2} ds = \lambda^{1/2} \int_{a}^{b} (\cos t)^{2\lambda} dt .$$
   In the case where $a <0$ and $b >0$ (so that $\gamma$ intersects the equator), an application of steepest descent gives
  $$ \lambda^{1/2} \int_{a}^{b} (\cos t)^{2\lambda} dt  \sim_{\lambda \rightarrow \infty} c_{\gamma} = {\mathcal O}(1).$$
  Similarily, when $\gamma = \{ (\theta(t),t); t \in [a,b] \}$ with $a<0, b>0$ and  $|\theta'(t)| \geq \frac{1}{C} >0$ one gets
  $$ \lambda^{1/2} \int_{a}^{b} (\cos \theta(t))^{2\lambda} dt  \sim_{\lambda \rightarrow \infty} \tilde{c}_{\gamma} = {\mathcal O}(1).$$
  These bounds are   consistent  with (and slightly stronger than)  the  general ${\mathcal O}(\log \lambda)$ bound given in Theorem \ref{mainthm}.
\subsection{Zonal harmonics.}  \label{zonal}
Let $x=(x_1,x_2)$ be geodesic normal coordinates on a convex surface of revolution centered at
 the north pole and $(r,\phi)$ denote the corresponding polar variables. We consider zonal harmonics centered at the north pole which can be written as oscillatory integrals  of the
 form
 \begin{equation} \label{zonal1}
 \phi_{\lambda}(x) = (2\pi \lambda)^{1/2} \int_{{\mathbb S}^{1}} e^{i \lambda \langle x, \omega \rangle}  a(x,\omega;\lambda) \, d\omega,
\end{equation}
% where $\chi_{\pm} \in C^{\infty}({\mathbb R})$ with $\chi_{+} + \chi_{-} = 1.$ and $\chi_{+} =1$ for $|x| \leq \frac{\pi}{2}- \delta$ and $\chi_{+} = 0$ for $|x| \geq \frac{\pi}{2} + \delta$.
where, $a(x,\omega;\lambda) \sim \sum_{j=0}^{\infty} a_{j}(x,\omega) \lambda^{-j}$ and $|a_{0}(x,\omega)| \geq \frac{1}{C} > 0$ with $a_{0}(x,\omega) = 1 + {\mathcal O}(|x|)$.    
The $\lambda^{1/2}$-factor in front of the terms in (\ref{zonal1}) ensures that
 $ \int_{M} |\phi_{\lambda}|^{2} dx = 1$. Conisider the meridian great circle
 $$\gamma = \{ (r,\phi); \phi = \alpha_0; \, 0 \leq \alpha_{0} \leq 2\pi \}.$$
From (\ref{zonal1}) it follows that $\phi_{\lambda}$ is radial and we get that
$$\int_{0}^{\pi} |\phi_{\lambda}(r,\alpha_0)|^{2} dr = 2\pi \lambda \int_{r=0}^{\lambda^{-1} } \left| \int_{{\mathbb S}^{1}}  e^{i\lambda \langle x, \omega \rangle } a(x,\omega;\lambda) d\omega  \right|^{2} dr + 2\pi \lambda \int_{r= \lambda^{-1}}^{\pi} \left| \int_{{\mathbb S}^{1}} e^{i \lambda \langle x, \omega \rangle} a(x,\omega;\lambda) d\omega \right|^{2} dr$$
$$= 2\pi \lambda \int_{r= \lambda^{-1}}^{\pi} \left| \int_{{\mathbb S}^{1}} e^{i \lambda \langle x, \omega \rangle} a(x,\omega;\lambda) d\omega \right|^{2} dr  + {\mathcal O}(1).$$
An application of stationary phase in the inner integral on the RHS of the last identity gives
$$\int_{0}^{\pi} |\phi_{\lambda}(r,\alpha_0)|^{2} dr  = 2\pi  \int_{\lambda^{-1}}^{\pi} \left| r^{-1/2} e^{i\lambda r} \right|^{2} \, dr + {\mathcal O}(1) = 2\pi \int_{\lambda^{-1}}^{\pi} \frac{dr}{r} + {\mathcal O}(1).$$
So, it follows that
\begin{equation} \label{zonal2}
\int_{0}^{\pi} |\phi_{\lambda}(r,\alpha_0)|^{2} dr  = 2\pi  \log \lambda + {\mathcal O}(1),
\end{equation}
and this example shows that the upper bounds in Theorems \ref{mainthm} and \ref{corollary} are sharp.

%\begin{thebibliography}{10}
%
%\bibliographystyle{plain}


\begin{thebibliography}{HHHH} 

\bibitem[A]{A} G. V. Avakumovi\'c, \"Uber die Eigenfunktionen auf geschlossenen Riemannschen Mannigfaltigkeiten.
 Math. Z. 65 (1956), 327--344.

%\bibitem[Be]{Be} P. H. B\'erard,
%On the wave equation on a compact Riemannian manifold without
%conjugate points. Math. Z. 155 (1977), no. 3, 249--276.


%\bibitem[BB]{BB} L. B\'erard-Bergery,  Quelques exemples de vari\'et\'es riemanniennes o\`u toutes les g\'od\`esiques issues
 %d'un point sont ferm\'ees et de même longueur, suivis de quelques r\'esultats sur leur topologie. Ann. Inst. Fourier (Grenoble) 27 (1977), 231--249.



%\bibitem[Besse]{Besse} A. L. Besse, {\it  Manifolds all of whose geodesics are closed.}
 %With appendices by D. B. A. Epstein, J.-P. Bourguignon, L. Bérard-Bergery, M. Berger and J. L. Kazdan.
  %Ergebnisse der Mathematik und ihrer Grenzgebiete  93. Springer-Verlag, Berlin-New York, 1978.

  %\bibitem[BL]{BL} A. M. Blokh and M. Y.  Lyubich,
   %Measurable dynamics of $S$-unimodal maps of the interval. Ann. Sci. École Norm. Sup. (4) 24 (1991), no. 5, 545--573.

%\bibitem[BD]{BD} K. Burns and V. J.  Donnay,
 %Embedded surfaces with ergodic geodesic flows. Internat. J. Bifur. Chaos Appl. Sci. Engrg. 7 (1997), no. 7, 1509--1527.


\bibitem[BGT]{BGT} N. Burq, P. Gerard and N. Tzvetkov,
Restrictions of the Laplace-Beltrami eigenfunctions to submanifolds.
 arXiv:math/0506394 (2006).
 
 \bibitem[BPU]{BPU} R. Brummelhuis, T. Paul and A. Uribe, Spectral estimates around a critical level. Duke Math. J. 78(3) (1995), 477-530.

\bibitem[CP]{CP} Y.Colin de Verdiere and B.Parisse, Equilibre instable en
regime semi-classique I: concentration microlocale, Comm. in
P.D.E. 19 (1994), 1535-1563.


%\bibitem [CV1]{CV1} Y.Colin de Verdiere, Spectre conjoint d'operateurs
%pseudodifferentiels qui commutent II: Le cas integrable,
%Math.Zeit. 171 (1980), 51-75.

%\bibitem[CV2]{CV2} Y.Colin de Verdiere, Quasi-modes sur les varietes
%Riemanniennes compactes, Invent.Math. 43 (1977), 15-52.

%\bibitem[CVN]{CVN} Y.Colin de Verdiere and S. Vu-Ngoc, Singular Bohr-Sommerfeld rules for 2D integrable systems, Ann. Scient. Ec. Norm. Sup. 4 (36) (2003), 1-55.







%\bibitem[D]{D} J. Duistermaat, {\it Fourier integral operators}, Courant Inst. Lecture Notes (1973).

\bibitem[DG] {DG} J. J. Duistermaat and V. W. Guillemin, The spectrum of positive elliptic operators
and periodic bicharacteristics. Invent. Math. {\bf 29} (1975),
39--79.

\bibitem[DM]{DM} H.R. Dullin and V.S. Matveev, A new integrable system on the sphere. Math. Res. Letters {\bf 11} (2004), 715-722.

%bibitem[DVN]{DVN} H.R. Dullin and S. Vu-Ngoc, Symplectic invariants near hyperbolic-hyperbolic points, preprint (2007).


\bibitem[DS]{DS} M. Dimassi and J. Sj\"{o}strand, {\it Spectral asymptotics in the  semi-classical limit}, London Math. Soc. Lecture Notes 268 (1999).




%\bibitem[DP]{DP} V.J. Donnay and C. C.  Pugh,
%Anosov geodesic flows for embedded surfaces. Geometric methods in
%dynamics. II. Astérisque No. 287, (2003), xviii, 61--69.

%\bibitem[DP2]{DP2} V.J. Donnay and C. C.  Pugh,
%Finite horizon Riemann structures and ergodicity. Ergodic Theory
%Dynam. Systems 24 (2004), no. 1, 89--106






%\bibitem[F]{F} J. Franks,  A variation on the Poincaré-Birkhoff theorem.
%{\it Hamiltonian dynamical systems} (Boulder, CO, 1987), 111--117,
%Contemp. Math., 81, Amer. Math. Soc., Providence, RI, 1988.


%\bibitem[GS1]{GS1} H. Gluck and D. A. Singer, Scattering of geodesic fields. I. Ann. of Math. (2) 108 (1978), no. 2, 347--372.

%\bibitem[GS2]{GS2} H. Gluck and D. A. Singer,   Scattering of geodesic fields. II. Ann. of Math. (2) 110 (1979), no. 2, 205--225.

%\bibitem[GS3]{GS3} H. Gluck and D. A. Singer,  Scattering problems in differential
%geometry. Geometry and topology (Proc. III Latin Amer. School of
%Math., Inst. Mat. Pura Aplicada CNPq, Rio de Janeiro, 1976), pp.
%191--229. Lecture Notes in Math., Vol. 597, Springer, Berlin,
%1977.




%\bibitem[GSj]{GSj} A. Grigis and J. Sjoestrand, {\it Microlocal analysis for
 %differential operators}, London Math. Soc. Lecture Notes 196 (1994).

 %\bibitem[GJ]{GJ} J. Guckenheimer and S.  Johnson, Distortion of $S$-unimodal maps. Ann. of Math. (2) 132 (1990), no. 1,
 %71--130.

 %\bibitem[GS]{GS} T. E.  Gureev and Yu. G.  Safarov,
% G. Exact spectral asymptotics for the Laplace operator on a manifold with periodic geodesics.
  %(Russian) Translated in Proc. Steklov Inst. Mth. 1989, no. 2, 35--53.
   %Boundary value problems of mathematical physics, 13 (Russian). Trudy Mat. Inst. Steklov. 179 (1988),
   %36--53, 241.

%\bibitem[H][{H} E. Hopf, ...

 %\bibitem[Ho 1] {Ho1} L. H\"ormander, The spectral function of an elliptic operator. Acta Math.
%{\bf 121} (1968), 193--218.



\bibitem[G1]{G1} V. Guillemin, Wave trace invariants, Duke Math. J. 83 (1996), 287-352.


\bibitem[G2]{G2} V. Guillemin, Wave trace invariants and a theorem of Zelditch,  Int. Math. Res. Not. (IMRN) 12 (1993), 303-308.

\bibitem[HS]{HS} B. Helffer and J. Sj\"{o}strand, Semiclassical analysis of Harper's equation III. Bull. Soc. Math. France, M\'{e}moire No. 39, 1990.


\bibitem[Ho]{Ho}  L. H\"ormander, {\em The Analysis of Linear Partial
Differential Operators, Volume I\/}, Springer-Verlag Berlin
Heidelberg, 1983.


%\bibitem[Ho IV]{HoIV}  L. H\"ormander, {\em The Analysis of Linear Partial
%Differential Operators, Volume IV\/}, Springer-Verlag Berlin
%Heidelberg, 1983.

\bibitem[IS]{IS} H. Iwaniec and P. Sarnak, $L^{\infty}$ norms of eigenfunctions of arithmetic surfaces. Ann. of Math. 2 (141) (1995), 301-320.


\bibitem[ISZ]{ISZ} A. Iantchenko, J. Sj\"{o}strand and M. Zworski, Birkhoff normal forms in semiclassical inverse problems. Math. Res. Lett. 9 (2002), 337-362.





\bibitem[K]{K} V.V. Kozlov, Topological obstructions to the integrability
 of natural mechanical systems, Soviet Math. Dokl. 20 (6) (1979).

%\bibitem[K]{K} U. Krengel, {\it Ergodic Theorems}, de Gruyter
%Studies in Mathematics 6.

%\bibitem[LS]{LS} J. Langer and D. A. Singer,
%Diffeomorphisms of the circle and geodesic fields on Riemann
%surfaces of genus one. Invent. Math. 69 (1982), no. 2, 229--242

\bibitem[Le] {Le}  B. M. Levitan, On the asymptoptic behavior of the spectral function of a self-adjoint differential
equaiton of second order. Isv. Akad. Nauk SSSR Ser. Mat. {\bf 16}
(1952), 325--352.

%\bibitem[M]{M} J.  Milnor,  On the concept of attractor. Comm. Math. Phys. 99 (1985), no. 2,
%177--195;  Correction and remarks: "On the concept of attractor".
%Comm. Math. Phys. 102 (1985), no. 3, 517--519.

\bibitem[L]{L} E. Lerman, Contact toric manifolds.  J. Symplectic Geom.  1  (2003),  no. 4, 785-828.

\bibitem[LS]{LS} E. Lerman and N. Shirokova, Completely integrable torus actions on symplectic cones. Math. Res. Lett. 9 (1) (2002), 105-115.

\bibitem[Mi]{Mi} A. Miller,  Riemmanian manifolds with integrable geodesic flows.

\bibitem[MVN]{MVN} E. Miranda and S. Vu-Ngoc, A singular Poincare lemma. IMRN 1 (2005), 27-45.

%\bibitem[P]{P} K. Petersen, {\it Ergodic Theory}, Cambridge
%Studies in Pure Mathematics 3.



%\bibitem[R]{R} C. Robinson, {\it Dynamical systems. Stability, symbolic dynamics, and chaos.}
 %Second edition. Studies in Advanced Mathematics. CRC Press, Boca Raton, FL, 1999.

\bibitem[R]{R} A. Reznikov,
Norms of geodesic restrictions for eigenfunctions on hyperbolic surfaces and representation theory, arXiv:math/0403437  (2004).



\bibitem[S]{S}  Yu. G.  Safarov, Asymptotics of a spectral function of a
positive elliptic operator without a nontrapping condition.
(Russian) Funktsional. Anal. i Prilozhen. 22 (1988), no. 3,
53--65, 96; translation in Funct. Anal. Appl. 22 (1988), no. 3,
213--223 (1989)






\bibitem[SV]{SV} Yu.  Safarov and D.  Vassiliev, {\it The asymptotic distribution of eigenvalues of partial differential operators}
. Translated from the Russian manuscript by the authors.
Translations of Mathematical Monographs, 155.
 American Mathematical Society, Providence, RI, 1997


\bibitem[Sa]{Sa} P. Sarnak, Arithmetic quantum chaos: The Schur lectures (1992)(Tel Aviv), 183-236, Israel Math. Conf. Proc. 8, 1995.



%\bibitem[Sj]{Sj} J.Sjoestrand, Semi-excited states in nondegenerate potential
%wells, Asymp.Anal. 6(1992), 29-43.

%\bibitem[SjZ]{SjZ} J.Sjoestrand and M. Zworski, Asymptotic
%distribution of resonances for convex obstacles, Acta Math. 183
%(1999), 191-253.

\bibitem[So1]{So1} C.D. Sogge,  Concerning the $L\sp p$ norm of spectral
 clusters for second-order elliptic operators on compact manifolds. J. Funct. Anal. 77 (1988), no. 1, 123--138.

\bibitem[So2]{So2} Sogge, Christopher D. Oscillatory integrals and spherical harmonics.
 Duke Math. J. 53 (1986), no. 1, 43--65.

 \bibitem[So3]{Soggebook}  Sogge, C.D.,
Fourier integrals in classical analysis. Cambridge Tracts in
Math., 105. Cambridge Univ. Press, Cambridge, 1993.


 \bibitem[SZ]{SZ} C. D. Sogge and S. Zelditch, Riemannian manifolds with maximal eigenfunction growth.
 Duke Math. J. 114 (2002), no. 3, 387--437.

 \bibitem[STZ]{STZ} C. D. Sogge, J. A. Toth and S. Zelditch, Blow-up of modes and quasimodes on Riemannian manifolds. (2006) preprint.




\bibitem[Ta]{Ta} D. Tataru, On the regularity of boundary traces for the wave equation, Ann. Scuola Norm. Sup. Pisa Cl. Sci. 26(1)(1998), 185-206.



\bibitem[T1]{T1} J.A.Toth, Eigenfunction localization in the quantized rigid
body, J.Diff.Geom. 43(4)(1996), 844-858.


\bibitem[T2]{T2} J.A.Toth, On the quantum expected values of integrable
metric forms, J.Diff.Geom. 52 (1999), no. 2, 327--374.

\bibitem[T3]{T3} J.A. Toth, A small-scale density of states formula, Comm.
 in Math. Phys. 238 (2003), 225-256.
 
\bibitem[T4]{T4} J.A. Toth, Eigenfunctions of quantum completely integrable systems, Encyclopedia of Math. Phys., Kluwer (2006), Vol. 2, 148-157.

\bibitem[TZ1]{TZ1} J. A. Toth and S. Zelditch,
Riemannian manifolds with uniformly bounded eigenfunctions. Duke
Math. J. 111 (2002), no. 1, 97--132.

\bibitem[TZ2]{TZ2} J. A. Toth and S. Zelditch, Norms of modes and quasi-modes revisited.
{\it Harmonic analysis at Mount Holyoke} (South Hadley, MA, 2001),
435--458, Contemp. Math., 320, Amer. Math. Soc., Providence, RI,
2003.

\bibitem[TZ3]{TZ3} J. A. Toth and S. Zelditch, $L^{p}$-norms of eigenfunctions in the completely integrable case. Annales Henri Poincar\'{e} 4 (2003), 343-368.



%\bibitem[Vo]{Vo} A.  Volovoy,  Improved two-term asymptotics for the eigenvalue distribution function of an elliptic operator on a compact manifold. Comm. Partial Differential Equations 15 (1990), no. 11, 1509--1563.





%\bibitem[We]{We} A. D.  Weinstein, The cut locus and conjugate locus of a riemannian manifold. Ann. of Math. (2) 87 1968 29--41.


%\bibitem[Z]{Z} S. Zelditch,  Fine structure of Zoll spectra. J. Funct. Anal.
%143 (1997), 415--460.


\bibitem[VN1]{VN1} S. Vu-Ngoc,  Formes normales semi-classiques des systemes completement integrables au voisinage d'un point critique de l'application moment. Asymptotic Analysis 24 (3) (2000), 319-342.

\bibitem[VN2]{VN2} S. Vu-Ngoc, Symplectic techniques for semiclassical integrable systems. Topological Methods in the Theory of Integrable Systems, Cambridge Scientific Publishers, Cambridge UK (2006).

\bibitem[Z1]{Z1} S. Zelditch, Wave invariants at elliptic closed geodesics. Geom. Funct. Anal. (GAFA) 
7 (1997), 145-213.




\bibitem[Z2]{Z2} S. Zelditch, Wave invariants for non-degenerate closed geodesics. Geom. Funct. Anal. (GAFA) 8 (1998), 179-217.





\end{thebibliography}
\end{document}